\documentclass[12pt]{amsart}
\usepackage{xcolor}
\usepackage{amsmath}
\usepackage{amssymb}
\usepackage{amsfonts} 
\usepackage{mathrsfs}
\usepackage{mathtools}

  \newcommand{\Edb}{\mbox{$\mathbb{E}$}}

   \newcommand{\Ndb}{\mbox{$\mathbb{N}$}}
   
   \newcommand{\Pdb}{\mbox{$\mathbb{P}$}}

   \newcommand{\Zdb}{\mbox{$\mathbb{Z}$}}

   \newcommand{\E}{\mbox{${\mathcal E}$}}

   \newcommand{\N}{\mbox{${\mathcal N}$}}
   
   \renewcommand{\P}{\mbox{${\mathcal P}$}}

\newcommand{\norm}[1]{\Vert#1\Vert}
\newcommand{\bignorm}[1]{\bigl\Vert#1\bigr\Vert}
\newcommand{\Bignorm}[1]{\Bigl\Vert#1\Bigr\Vert}
\newcommand{\biggnorm}[1]{\biggl\Vert#1\biggl\Vert}

\newtheorem{theorem}{Theorem}[section]
\newtheorem{lemma}[theorem]{Lemma}
\newtheorem{corollary}[theorem]{Corollary}
\newtheorem{proposition}[theorem]{Proposition}
\newtheorem{definition}[theorem]{Definition}
\theoremstyle{remark}
\newtheorem{remark}[theorem]{\bf Remark}
\theoremstyle{definition}

\numberwithin{equation}{section}

\author{Léonard Cadilhac}
\email{cadilhac@imj-prg.fr}
\address{IMJ-PRG – UMR7586
Sorbonne University, France}
\author[C. Le Merdy]{Christian Le Merdy}
\email{clemerdy@univ-fcomte.fr}
\address{Laboratoire de Math\'ematiques de Besan\c con, UMR 6623,
CNRS, Universit\'e Marie et Louis Pasteur,
25030 Besan\c{c}on Cedex, France}
\author[S. Zadeh]{Safoura Zadeh}
\email{jsafoora@gmail.com}
\address{Institute of Mathematics, University of Bristol, U.K.}

\begin{document}

\title[Random ergodic averages]{A Non-commutative Individual Ergodic Theorem Along Sparse Random Subsequences}

\begin{abstract}
Let $(M,\tau)$ be a semifinite von Neumann algebra, let $J$ be a 
trace-preserving Jordan isomorphism, and let $(n_k)_{k\geq 1}$ be a random increasing sequence of integers obtained by selecting each integer $n\geq 1$
independently with probability $n^{-\alpha}$, where $0<\alpha<\frac12$. We show that, almost surely, for every $x\in L^1(M)$,
\(
   \frac1m\sum_{k=1}^m J^{n_k}(x)
\)
converges bilaterally almost uniformly. This extends LaVictoire's classical random $L^1$ ergodic theorem to the non-commutative setting. 
\end{abstract}

\maketitle

\noindent
{\it 2020 Mathematics Subject Classification: Primary 46L53, 46L55; Secondary 46L51, 37A99}

\smallskip
\noindent
{\it Key words:}  random sets; individual ergodic theorems; non-commutative \(L^p\)-spaces.

\bigskip
\section{Introduction}\label{1}
Let $(X,\mathcal F,\mu)$ be a measure space and let $T\colon
X\to X$ be a
measure-preserving transformation. Birkhoff's individual ergodic theorem
asserts that, for every $f\in L^1(X)$, the Ces\`aro averages
\[
   \frac{1}{N}\sum_{n=1}^N f\circ T^n
\]
converge almost everywhere as $N$ tends to infinity. A modern proof relies on
a weak-type $(1,1)$-maximal inequality, which is then combined with density
arguments to pass from a convenient dense class of functions to all of
$L^1$; see, for example, \cite[Chapter~1]{Kren}.

A non-commutative analogue of this circle of ideas was 
initiated by Yeadon
\cite{Ye}, after Lance's pioneering work \cite{La}. 
In this setting, one replaces a measure space by a von Neumann
algebra with a normal, faithful and semifinite trace $(M,\tau)$, the classical
spaces $L^p(X)$ by the associated non-commutative spaces $L^p(M)$, and
pointwise almost everywhere convergence by bilaterally almost uniform
convergence (b.a.u.\ convergence in short). 
If $T\colon M\to M$ is a positive Dunford--Schwartz operator $T$ on $M$ (that is, a positive operator which is
contractive on both $M=L^\infty(M)$ and $L^1(M)$),
Yeadon's maximal inequality provides the non-commutative counterpart of the
classical weak-type $(1,1)$-maximal inequality and yields individual ergodic
convergence in $L^1(M)$. Subsequent developments, notably the
non-commutative maximal ergodic theorems of Junge and Xu \cite{JX},
established the corresponding maximal inequalities and individual ergodic
theorems in non-commutative $L^p$-spaces for $1<p<\infty$, and provided a
systematic framework for b.a.u.\ convergence. Further individual ergodic results in the semifinite setting can be found in \cite{CL}.

A natural direction is to replace the full sequence of integers by a sparse
subsequence. In the commutative theory, this question asks whether
\[
   \frac{1}{m}\sum_{k=1}^m f\circ T^{a_k}
\]
converges almost everywhere for a prescribed increasing sequence
$(a_k)_{k\geq 1}$ of positive integers. Bourgain's pointwise ergodic theorem along
polynomial sequences, in particular along the squares ($a_k = k^2$), \cite{B2}, established a
fundamental positive result for $L^p$, $p>1$. The corresponding non-commutative polynomial ergodic theorem was subsequently established by Hong and Wang \cite{HW}, who obtained b.a.u.\ convergence for the associated non-commutative ergodic averages in the range \(1<p<\infty\). The range
$p=1$ is dramatically different: the square averages are universally
``$L^1$-bad'', meaning that for every invertible measure-preserving transformation on a non-atomic finite measure space there exists a function $f\in L^1$ for which the averages along the squares fail to converge
almost everywhere, as shown by Buczolich and Mauldin
\cite{BM1, BM2}. Thus, sparse subsequences expose a genuine endpoint
obstruction which is invisible in the reflexive range $1<p<\infty$.

Sparse random subsequences provide a flexible probabilistic model for this
phenomenon. Fix $0<\alpha<1$, and let $(X_n)_{n\geq 1}$ be independent
Bernoulli random variables satisfying
\[
   \mathbb P(X_n=1)=n^{-\alpha}.
\]
The corresponding random set has, almost surely, cardinality of order
$N^{1-\alpha}$ up to $N$. More precisely, if $(n_k)_{k\geq 1}$ denotes the
sequence of selected integers, then
$n_k\sim ((1-\alpha)k)^{\frac{1}{1-\alpha}}$
almost surely. The case $\alpha=\frac12$ therefore provides 
the same order of growth
as the squares, while $\alpha<\frac12$ gives a random sequence 
denser than the
squares. For $p>1$, random ergodic theorems 
for averages of the form (\ref{Kind}) below are by now well
established in both classical and non-commutative settings; 
see, for example,
\cite{B, LZ}. More specifically,
in the non-commutative framework, the recent result of
Le Merdy and Zadeh \cite{LZ} shows that, for every positive
Dunford--Schwartz operator $T$ on $M$ and every
$1<p<\infty$, the randomly selected averages
\begin{equation}\label{Kind}
   \frac{1}{m}\sum_{k=1}^m T^{n_k(\omega)}(x)
\end{equation}
converge bilaterally almost uniformly for all $x\in L^p(M)$, almost surely
in $\omega$.

The endpoint $p=1$ is considerably more delicate. In the commutative case,
LaVictoire, \cite{LaV}, proved an $L^1$ individual ergodic theorem for the above sparse
random subsequences in the range $0<\alpha<\frac12$. 
His result may be
viewed as a random $L^1$ substitute for the square problem, valid for
sequences which are almost surely slightly denser than the squares. The
corresponding question at and beyond the square-density threshold, namely for
$\alpha\geq \frac12$, remains outside the reach of the known $L^1$ random
methods. The proof of LaVictoire's theorem relies on a refined
Calder\'on--Zygmund decomposition together with probabilistic estimates for
some random kernels.

The purpose of the present paper is to establish the endpoint \(L^1\) individual ergodic theorem for sparse random subsequences in the non-commutative setting. We obtain a b.a.u.\ convergence theorem for trace-preserving Jordan isomorphisms, thereby extending the random \(L^1\) phenomenon of LaVictoire to non-commutative framework. More precisely, let $(M,\tau)$ be a tracial von Neumann
algebra and let $J\colon M\to M$ be a trace-preserving Jordan isomorphism. Let
$0<\alpha<\frac12$, let $(X_n)_{n\geq 1}$ be as above and
set
\[
   W_N=\sum_{n=1}^N n^{-\alpha}.
\]
Our main result asserts that there exists
a measurable set $\Omega'\subset\Omega$ of 
probability one such that, for
every $\omega\in\Omega'$ and every $x\in L^1(M)$,
\[
   \frac{1}{W_N}\sum_{n=1}^N X_n(\omega)J^n(x)
   \longrightarrow P_1(x)
   \qquad\text{b.a.u.\ as }N\to\infty,
\]
where $P_1$ denotes the $L^1$ extension of the ergodic projection
associated with $J$ on $L^2(M)$. Equivalently, 
\[
   \frac{1}{m}\sum_{k=1}^m J^{n_k(\omega)}(x)
   \longrightarrow P_1(x)
   \qquad\text{b.a.u.\ as }m\to\infty.
\]
This gives an almost sure individual random ergodic theorem in
non-commutative $L^1$ for sparse random subsequences of density
$N^{1-\alpha}$, $0<\alpha<\frac12$.

The restriction to Jordan isomorphisms is natural for the method used here.
Trace-preserving Jordan isomorphisms act isometrically on $L^1(M)$ and
preserve the projection structure needed for transference. In particular,
they include the usual trace-preserving $\ast$-automorphisms, while still
allowing the non-multiplicative Jordan setting. The proof reduces the desired
maximal inequality for the random ergodic averages associated with $J$ to a
maximal inequality for convolution operators acting on
$\ell^1_{\mathbb Z}(L^1(M))$. This transference step is one of the points at
which the Jordan structure is essential.

A principal difficulty lies in proving the required weak-type $(1,1)$-maximal 
inequality in the non-commutative endpoint space 
$\ell^1_{\mathbb Z}(L^1(M))$. In the classical
proof, LaVictoire uses a Calder\'on--Zygmund decomposition adapted to the
random kernels. Such a decomposition cannot be transferred directly to the
non-commutative setting: projections do not commute with the functions being
decomposed, there are no pointwise level sets in the classical sense, and the
bad part of the decomposition must be controlled bilaterally.
Fortunately, non-commutative extensions of Calder\'on--Zygmund decomposition 
and several of its applications have been studied over the last decade (see for example \cite{P,CCAP,HLX}). 
For our argument, we use and adapt the construction described in \cite{CCAP}, which is based on
Cuculescu projections \cite{C} and dyadic conditional expectations.

The argument separates the random kernel into its deterministic mean and its
centred part. The deterministic contribution is controlled by Yeadon's
maximal inequality, together with summation by parts, while the centred
contribution is handled through a non-commutative version of LaVictoire's  criterion. The hypothesis $\alpha<\frac12$ enters precisely in
the almost sure off-diagonal correlation estimate for the centred kernels.
Once the weak-type $(1,1)$-maximal inequality has been obtained, the transference principle carries it to the iterates of $J$; the
$L^2$ convergence theorem of \cite{LZ} and an appropriate 
non-commutative Banach
principle then yield the asserted $L^1$ b.a.u.\ convergence.

The paper is organized as follows. Section 2 recalls the necessary background
on non-commutative $L^p$-spaces, Dunford--Schwartz operators, b.a.u.\ convergence, and sparse random sets. Section 3 proves a
non-commutative Banach principle for b.a.u.\ convergence and develops the
transference principle which carries weak-type $(1,1)$-maximal inequality from the
shift on $\ell^1_{\mathbb Z}(L^1(M))$ to trace-preserving Jordan
isomorphisms on $L^1(M)$. Section 4 is the technical core of the paper: it
establishes a criterion for weak-type $(1,1)$-maximal 
inequalities for the
random convolution averages, using a non-commutative
Calder\'on--Zygmund decomposition as indicated before. 
Finally, Section 5
combines these ingredients with the known 
$L^2$ random ergodic theorem of
\cite{LZ} to prove the desired $L^1$ b.a.u.\ convergence.\\

\section{Basic definitions and background}\label{1}

Let $M$ be a semifinite von Neumann algebra, with positive cone $M_+$,
and let $\tau\colon M_+\to[0,\infty]$ be a fixed normal semifinite
faithful trace. We refer to $(M,\tau)$ as a tracial von Neumann algebra.
Assume that $M$ acts on a Hilbert space $H$, and denote by $L^0(M)$ the
$*$-algebra of all closed, densely defined $\tau$-measurable operators on
$H$. For $1\leq p<\infty$, we consider the non-commutative $L^p$-space
$$
L^p(M)=\bigl\{x\in L^0(M)\,:\,\tau\bigl(\vert x\vert^p\bigr)<\infty\bigr\}.
$$
It is a Banach space for the norm
$$
\norm{x}_p=(\tau(\vert x\vert^p))^{\frac1p}.
$$
Moreover, $x^*\in L^p(M)$ and $\norm{x^*}_p=\norm{x}_p$ for every
$x\in L^p(M)$. We set $L^\infty(M)=M$. We also use the notation $\tau$ 
for the natural extension of the trace to the positive cone
$L^0(M)_+$  for the contractive functional
on $L^1(M)$ extending $\tau_{\vert L^1(M)_+}$.
We refer to \cite{Hiai,PX} for further
details and background.

The duality between these spaces is given by the non-commutative
H\"older inequality. For $1\leq p<\infty$, let
$p'=\frac{p}{p-1}$ denote the conjugate exponent. Then
$L^p(M)\,\cdotp L^{p'}(M)=L^1(M)$ and
\begin{equation}\label{2H}
\vert\tau(xy)\vert\leq \norm{x}_p\norm{y}_{p'},
\qquad x\in L^p(M),\ y\in L^{p'}(M).
\end{equation}
The pairing
\[
\langle x,y\rangle:=\tau(xy)
\]
induces an isometric identification
$L^{p'}(M)\simeq L^p(M)^*$. In particular, $L^p(M)$ is reflexive for
$1<p<\infty$. The case $p=2$ gives the Hilbert space structure
$$
(x\vert y)=\tau(y^*x),\qquad x,y\in L^2(M).
$$
The $L^p$-spaces also carry their natural $M$-bimodule structure: for
$1\leq p\leq\infty$,
$$
M\cdotp L^p(M)\,\cdotp M=L^p(M),
$$
with
$$
\norm{y_1xy_2}_p
\leq\norm{y_1}_\infty\norm{x}_p\norm{y_2}_\infty,
\qquad x\in L^p(M),\ y_1,y_2\in M.
$$

We will also use the compatibility of the $L^p$-scale with
approximation and interpolation. For any $1\leq p<\infty$, the space
$L^p(M)\cap L^q(M)$ is dense in $L^p(M)$ for every
$1\leq q\leq\infty$. Moreover,
$$
L^p(M)=[L^\infty(M),L^1(M)]_{\frac{1}{p}},
$$
isometrically, where the right-hand side is defined via complex
interpolation.

We now turn to the operators that generate the ergodic averages.
Let $T\colon M\to M$ be a bounded linear map. We say that $T$ is positive
if $T(M_+)\subset M_+$, and that $T$ is a Dunford--Schwartz operator if
$$
\norm{T(x)}_\infty\leq \norm{x}_\infty\ 
\forall x\in M
\qquad \hbox{and}\qquad
\norm{T(x)}_1\leq \norm{x}_1\
\forall x\in M\cap L^1(M),
$$
for all $x\in M$. The two endpoint estimates allow $T$ to be extended
consistently across the entire $L^p$-scale: by density and interpolation,
its restriction to $M\cap L^p(M)$ has a unique contractive extension
$$
T_p\colon L^p(M)\longrightarrow L^p(M),
\qquad 1\leq p\leq\infty.
$$

For positive contractions, the trace gives an equivalent description of
the Dunford--Schwartz property. More precisely, if $T$ is positive, then
$T$ is a Dunford--Schwartz operator if and only if
$\tau\circ T\leq\tau$, see \cite[Lemma 1.1]{JX}.

The corresponding deterministic ergodic averages are
\begin{equation}\label{2Cesaro}
A_N(T)=\frac1N\sum_{n=1}^N T^n,\qquad N\geq1.
\end{equation}
When $1<p<\infty$, the reflexivity of $L^p(M)$ allows the Mean Ergodic
Theorem to be applied to $T_p$. We have the decomposition
$$
L^p(M)= {\rm Ker}(I_{L^p} -T_p)\oplus
\overline{\rm Ran}(I_{L^p}-T_p).
$$
Let $P_p$ denote the projection onto
${\rm Ker}(I_{L^p} -T_p)$ along
$\overline{\rm Ran}(I_{L^p}-T_p)$. Then
$P_p$ is the strong limit of the Ces\`aro averages
$(A_N(T_p))_{N\geq1}$. See, e.g.,
\cite[Section 8.4]{EFHN} or \cite[Theorem 1.15]{LM}. 

\begin{lemma}\label{2Gamma1}
Let $T\colon M\to M$ be a positive 
Dunford--Schwartz operator. The restriction of 
$P_2$ to $L^1(M)\cap L^2(M)$
extends to a contraction 
$$
P_1\colon L^1(M)\longrightarrow L^1(M).
$$
\end{lemma}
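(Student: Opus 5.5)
The plan is to show that $\norm{P_2(x)}_1\leq\norm{x}_1$ for every $x\in L^1(M)\cap L^2(M)$; the contractive extension to $L^1(M)$ then follows by density of $L^1(M)\cap L^2(M)$ in $L^1(M)$. The tool is the Ces\`aro approximation $P_2(x)=\lim_N A_N(T_2)(x)$ in $L^2(M)$. Since $T$ is a Dunford--Schwartz operator, $T_1$ and $T_2$ agree on $L^1(M)\cap L^2(M)$. Hence $A_N(T_2)(x)=A_N(T_1)(x)\in L^1(M)\cap L^2(M)$, and by the triangle inequality
$$
\norm{A_N(T_2)(x)}_1\leq\frac1N\sum_{n=1}^N\norm{T_1^n(x)}_1\leq\norm{x}_1 .
$$
It therefore remains to pass to the limit in the $L^1$-norm along an $L^2$-convergent sequence, which is the only non-routine step.

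For this I use lower semicontinuity of the $L^1$-norm with respect to $L^2$ convergence, argued by duality. Set $y_N=A_N(T_2)(x)$ and $y=P_2(x)$, so that $y_N\to y$ in $L^2(M)$. I do not assume $y\in L^1(M)$ in advance; I define
$$
\norm{y}_1:=\sup\bigl\{\vert\tau(yz)\vert : z\in L^2(M)\cap M,\ \norm{z}_\infty\leq1\bigr\}\in[0,\infty].
$$
For each such $z$, the pairing $\tau(y_Nz)$ converges to $\tau(yz)$, because $z\in L^2(M)$. By H\"older's inequality (\ref{2H}), $\vert\tau(y_Nz)\vert\leq\norm{y_N}_1\leq\norm{x}_1$. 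Passing to the limit gives $\vert\tau(yz)\vert\leq\norm{x}_1$, hence $\norm{y}_1\leq\norm{x}_1$.

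To conclude that $y\in L^1(M)$ with $\norm{y}_1\leq\norm{x}_1$, I need the supremum above to compute the genuine $L^1$-norm of a $\tau$-measurable operator in $L^2(M)$. I argue as follows. Write the polar decomposition $y=u\vert y\vert$ and let $e_k$ be finite-trace spectral projections of $\vert y\vert$ increasing to the support of $\vert y\vert$, chosen so that $\vert y\vert e_k$ is bounded (for instance $e_k=\chi_{[1/k,k]}(\vert y\vert)$, which has finite trace since $y\in L^2$). Then $z_k=e_ku^*$ lies in $L^2(M)\cap M$ with $\norm{z_k}_\infty\leq1$, and
$$
\tau(yz_k)=\tau(\vert y\vert e_k)\nearrow\tau(\vert y\vert)
$$
by normality of the trace. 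This gives $\tau(\vert y\vert)\leq\norm{x}_1$, so $y\in L^1(M)\cap L^2(M)$ and $\norm{P_2(x)}_1\leq\norm{x}_1$.

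Since $P_2$ restricted to $L^1(M)\cap L^2(M)$ is thus linear and $L^1$-contractive on a dense subspace, it extends uniquely to a contraction $P_1\colon L^1(M)\to L^1(M)$. Positivity of $T$ is not essential in this argument; only the two Dunford--Schwartz contractivity estimates are used.
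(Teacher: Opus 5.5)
Your proof is correct, and it takes a genuinely different route from the paper's.

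The paper argues through positivity. Since $T$ is positive with $\tau\circ T\leq\tau$, each $A_N(T)$ is positive and trace-reducing. The Fatou property of $\tau$ along $A_N(T_2)(x)\to P_2(x)$ (convergence in $L^2$, hence in measure) gives $\tau\circ P_2\leq\tau$ on $L^1(M)_+\cap L^2(M)$. The argument of Lemma 1.1 in Junge--Xu (positive and trace-reducing implies $L^1$-contractive) then produces $P_1$.

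You bypass positivity altogether. The $L^1$-contractivity of $T_1$ makes every Ces\`aro mean an $L^1$-contraction on all of $L^1(M)\cap L^2(M)$, not only on positive elements. You then pass to the limit through a duality form of lower semicontinuity of $\norm{\cdot}_1$, testing against $z\in M\cap L^2(M)$ with $\norm{z}_\infty\leq1$. Your choice $z_k=e_ku^*$ with $e_k=\chi_{[1/k,k]}(\vert y\vert)$ correctly recovers $\tau(\vert y\vert)$ by normality.

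What your route buys:
\begin{itemize}
\item it is self-contained and works for any Dunford--Schwartz operator, positive or not;
\item it needs only weak $L^2$ convergence of the averages;
\item it avoids the passage from positive to general elements. In the Junge--Xu argument that step relies on positivity once more, and the paper must adapt it from maps on $M$ to the map $P_2$ on $L^2(M)$.
\end{itemize}
The paper's route is shorter given the citation, and it yields as a by-product that $P_2$ is positive and trace-reducing.

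The one step you leave implicit is the consistency $T_1=T_2$ on $L^1(M)\cap L^2(M)$. It is standard: approximate $x$ simultaneously in $L^1$ and $L^2$ by elements of $M\cap L^1(M)$ (e.g.\ by spectral truncation of $\vert x\vert$), and note that the two limits agree in measure.
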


\begin{proof}
Since $\tau\circ T\leq \tau$, we have
$\tau\circ A_N(T)\leq\tau$ for all $N\geq1$. Since each $A_N(T)$ is
positive and
\[
P_2(x)=L^2\text{-}\lim_N[A_N(T_2)](x)
\qquad x\in L^2(M),
\]
Fatou's property for $\tau$ gives \(\tau\circ P_2\leq\tau\)
on $L^1(M)_+\cap L^2(M)$. Arguing as in
\cite[Lemma 1.1]{JX}, we obtain the contractive extension
$P_1\colon L^1(M)\to L^1(M)$.
\end{proof}

Projections in $M$ will play an important role in the convergence
arguments. We denote by $\P(M)$ the set of projections in $M$, and for
$e\in\P(M)$ write
$e^\perp=1-e$ for its complement. We shall repeatedly use the estimate
\begin{equation}\label{2Tr}
\tau\bigl((e\wedge f)^\perp\bigr)
\leq \tau(e^\perp)+\tau(f^\perp),
\end{equation}
valid for all $e,f\in\P(M)$. We refer to
\cite[Section 5.1]{Tak} for general background on projections in von
Neumann algebras.

Following \cite[Definition 6.1]{JX}, a sequence
$(x_n)_{n\geq1}$ in $L^0(M)$ is said to converge bilaterally almost
uniformly (b.a.u.\ in short) to $x\in L^0(M)$ if, for every $\varepsilon>0$, there
exists $e\in\P(M)$ such that
$$
\tau(e^\perp)\leq\varepsilon
\qquad\text{and}\qquad
\|e(x_n-x)e\|_\infty\longrightarrow0
\quad\text{as }n\to\infty.
$$

We record a simple observation.

\begin{remark}\label{2tn}
For any $x\in L^{0}(M)$ and any sequence $(t_n)_{n\geq1}$ of complex
numbers satisfying $\lim_n t_n=1$, the sequence $(t_nx)_{n\geq1}$
converges b.a.u.\ to $x$. Indeed, given $\varepsilon>0$, the
$\tau$-measurability of $x$ provides $e\in\P(M)$ such that
$\tau(e^\perp)\leq\varepsilon$ and $exe\in M$.
\end{remark}

We finally introduce the random sampling scheme. Following \cite{LZ},
and also \cite{FLW} and \cite{LaV}, let $(\Omega,\mathbb P)$ be a
probability space, fix $\alpha\in(0,1)$, and let
$(X_n)_{n\geq1}$ be independent Bernoulli random variables
$X_n\colon\Omega\to\{0,1\}$ satisfying
\begin{equation}\label{2Alpha}
\mathbb P(X_n=1)=n^{-\alpha},\qquad n\geq1.
\end{equation}
The associated normalization is
\begin{equation}\label{2WN}
W_N:=\sum_{n=1}^N n^{-\alpha}.
\end{equation}
Its asymptotic behaviour is
\begin{equation}\label{2WNEquiv}
W_N\sim(1-\alpha)^{-1}N^{1-\alpha}.
\end{equation}
The strong law of large numbers \cite[Appendix A]{FLW} shows that for almost every $\omega\in\Omega$,
\begin{equation}\label{2SLLN}
\sum_{n=1}^NX_n(\omega)\sim W_N,
\end{equation}

We associate to the random
sequence $(X_n)$ the hitting-time random
variables $n_k\colon\Omega\to\Ndb$, $k\geq1$, defined almost everywhere by
$$
n_k=\min\bigl\{N\geq1\,:\,X_1+\cdots+X_N=k\bigr\}.
$$
The preceding counting asymptotic yields
$$
n_k(\omega)\sim
\bigl((1-\alpha)k\bigr)^{\frac{1}{1-\alpha}}
$$
for almost every $\omega\in\Omega$.

The random ergodic theorem of \cite{LZ} identifies the almost sure
behaviour of the corresponding sparse averages. We recall the result in
the form that will be used below. Note that the two parts (1) and (2)
are essentially equivalent, see the proof of \cite[Theorem 5.2]{LZ} for details.

\begin{theorem}\label{2Back}
Let $T\colon M\to M$ be a positive Dunford--Schwartz operator, and let
$1<p<\infty$. Then there exists a measurable set
$\Omega'\subset\Omega$ with $\Pdb(\Omega')=1$ such that, for every
$\omega\in\Omega'$ and every $x\in L^p(M)$, the following equivalent statments hold:
\begin{itemize}
\item[(1)]
$$
\frac{1}{W_N}\sum_{n=1}^N X_n(\omega)\, T_p^n(x)
\,\longrightarrow P_p(x)\quad\hbox{b.a.u.}
\quad\hbox{as } N\to\infty.
$$
\item[(2)]
$$
\frac{1}{m}\sum_{k=1}^m T_p^{n_k(\omega)}(x)
\,\longrightarrow P_p(x)\quad\hbox{b.a.u.}
\quad\hbox{as } m\to\infty.
$$
\end{itemize}
\end{theorem}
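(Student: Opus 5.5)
The statement is essentially the random ergodic theorem of \cite{LZ}, so the plan is to quote it for one of the two formulations and deduce the other. Concretely, I would take (1) from \cite[Theorem 5.2]{LZ}. That result gives, for a positive Dunford--Schwartz operator $T$ and $1<p<\infty$, a set $\Omega'$ of full probability on which the weighted averages $W_N^{-1}\sum_{n\le N}X_n(\omega)T_p^n(x)$ converge b.a.u. to $P_p(x)$ for all $x\in L^p(M)$. We may intersect $\Omega'$ with the full-measure set on which the strong law of large numbers (\ref{2SLLN}) holds and all the hitting times $n_k(\omega)$ are finite. It then remains to prove that (1) and (2) are equivalent for each fixed $\omega$ in this set and each $x$.

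The key identity concerns the indices of the form $N=n_m(\omega)$. Since $X_n(\omega)=1$ exactly when $n=n_k(\omega)$ for some $k$, we have
\[
\sum_{n=1}^{n_m(\omega)} X_n(\omega)T_p^n(x)=\sum_{k=1}^m T_p^{n_k(\omega)}(x),
\qquad
\sum_{n=1}^{n_m(\omega)}X_n(\omega)=m.
\]
Set $S_N=\sum_{n\le N}X_n(\omega)T_p^n(x)$ and $c_N=\sum_{n\le N}X_n(\omega)$. Then $m^{-1}\sum_{k\le m}T_p^{n_k}(x)=c_{n_m}^{-1}S_{n_m}$. Conversely, for $n_m\le N<n_{m+1}$ we have $S_N=S_{n_m}$ and $c_N=m$. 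So the sequence $(c_N^{-1}S_N)_N$ is just the sequence in (2) with each term repeated finitely many times, and b.a.u. convergence of one is equivalent to b.a.u. convergence of the other. The same projection $e$ works for both, since $\sup_{N\ge N_0}\|e(c_N^{-1}S_N-y)e\|_\infty$ equals the corresponding supremum over $m$.

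It remains to pass between the normalizations $W_N$ and $c_N$. We write $c_N^{-1}S_N=(W_N/c_N)\,W_N^{-1}S_N$, and $t_N:=W_N/c_N\to1$ by (\ref{2SLLN}). We then need the following extension of Remark \ref{2tn}: if $y_N\to y$ b.a.u. and $t_N\to1$, then $t_Ny_N\to y$ b.a.u. To prove it, choose $e$ with $\tau(e^\perp)\le\varepsilon/2$ and $\|e(y_N-y)e\|_\infty\to0$. Choose $f$ with $\tau(f^\perp)\le\varepsilon/2$ and $fyf\in M$, using $\tau$-measurability as in Remark \ref{2tn}. For $g=e\wedge f$, (\ref{2Tr}) gives $\tau(g^\perp)\le\varepsilon$. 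Moreover $g(t_Ny_N-y)g=t_Ng(y_N-y)g+(t_N-1)gyg$, which tends to $0$ in $\|\cdot\|_\infty$. The same argument with $t_N^{-1}$ gives the reverse implication, so (1) and (2) are equivalent.

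The only delicate point is this last step. Because b.a.u. convergence is not given by a norm, we must intersect projections rather than simply estimate, and we must ensure that $gyg$ is bounded. This is why $f$ is chosen using measurability of the limit $y=P_p(x)$. Everything else is bookkeeping once \cite{LZ} is invoked.
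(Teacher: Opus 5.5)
Your proposal is correct and follows the paper's route. The paper simply quotes \cite{LZ} and refers to the proof of \cite[Theorem 5.2]{LZ} for the equivalence of (1) and (2); that equivalence is exactly your block-repetition argument ($n_m\le N<n_{m+1}$), combined with the renormalization $W_N/c_N\to1$ from (\ref{2SLLN}) and the $t_Ny_N$ version of Remark \ref{2tn}, all of which you supply correctly. Because you prove both implications, it does not matter whether \cite[Theorem 5.2]{LZ} is literally stated in form (1) or form (2).
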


\section{Preliminary results}\label{3}

\subsection{A Banach principle for b.a.u.\ convergence}

In this subsection, we establish a non-commutative version of the classical Banach principle for pointwise convergence; see \cite[Chapter 1, Theorem 7.2]{Kren}. More precisely, we show that, under a weak-type $(1,1)$-maximal inequality, b.a.u.\ convergence on an $L^1(M)$-dense subset extends to the whole space. We then apply this principle to weighted ergodic averages. We refer to \cite{Ye}, \cite[Section 6]{JX}, and \cite{CL} for related results.

\begin{definition}\label{3MI}
Let \((V_j)_{j\ge1}\) be a sequence of bounded 
maps \(V_j : L^1(M) \longrightarrow L^1(M)\).
We say that \((V_j)_{j\ge1}\) satisfies a weak-type
$(1,1)$-maximal inequality if there is a constant \(C\geq 0\) 
such that for every \(x\in L^1(M)\) and every \(\lambda>0\), 
there exists a projection \(e\in \P(M)\) satisfying
\begin{equation}\label{3Weak11}
\tau(e^\perp)\le \frac{C\|x\|_1}{\lambda}
\qquad\text{and}\qquad
\|eV_j(x)e\|_\infty \le \lambda, \quad \forall j\ge1.
\end{equation}
\end{definition}

\begin{remark}\label{3Pos}
It is enough to verify the weak-type $(1,1)$-maximal inequality in Definition \ref{3MI} for positive elements. Indeed, let $x\in L^1(M)$. We can write $x=(x_1-x_2)+i(x_3-x_4)$, where $x_1,\ldots,x_4\in L^1(M)_+$ satisfy $\|x_i\|_1\leq \|x\|_1$ for $i=1,\ldots,4$. Suppose that for some $C\geq0$, the estimate in \eqref{3Weak11} is valid for every positive $x\in L^1(M)$. Applying this estimate to each $x_i$ with $\lambda/4$ in place of $\lambda$, we obtain projections $e_1,\ldots,e_4\in\P(M)$ such that $\tau(e_i^\perp)\leq 4\, C\lambda^{-1}\|x\|_1$ and $\|e_iV_j(x_i)e_i\|_\infty\leq\lambda/4$ for every $i\in\{1,\ldots,4\}$ and for all $j\geq 1$. Set $e=e_1\wedge\cdots\wedge e_4$. By \eqref{2Tr}, we have $\tau(e^\perp)\leq 16\, C\lambda^{-1}\norm{x}_1$. Moreover $\norm{eV_j(x)e}_\infty\leq\lambda$ for
all $j\geq 1$. Hence, $(V_j)_{j\geq1}$ satisfies a weak-type $(1,1)$-maximal inequality with constant $16\, C$.
\end{remark}

The following result is a non-commutative Banach principle that we will use to extend b.a.u.\ convergence from an $L^1(M)$-dense subset to all of $L^1(M)$ in Corollary \ref{3Y2}.
\begin{lemma}\label{3Y1}
Let $(V_j)_{j\geq1}$ be a sequence of bounded maps on $L^1(M)$ that
satisfies a weak-type $(1,1)$-maximal inequality. Let $x\in L^1(M)$,
and suppose that there exists a sequence $(y_m)_{m\geq1}$ in $L^1(M)$
such that
\[
\|x-y_m\|_1\longrightarrow0
\qquad\text{as }m\to\infty,
\]
and for every $m\geq1$,
\[
V_j(y_m)\longrightarrow0
\qquad\text{b.a.u.\ as }j\to\infty.
\]
Then
\[
V_j(x)\longrightarrow0
\qquad\text{b.a.u.\ as }j\to\infty.
\]
\end{lemma}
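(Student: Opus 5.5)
The plan is the classical $\varepsilon/2^k$ argument of the Banach principle, carried out with projections. Note first that the $V_j$ need not be linear in the definition, but in our applications they are, and I assume linearity (as implicit in ``bounded maps''); then $V_j(x)=V_j(x-y_m)+V_j(y_m)$.

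Fix $\varepsilon>0$. For each $k\geq1$, choose $m_k$ such that $\|x-y_{m_k}\|_1\leq \varepsilon\, 4^{-k}/C'$, where $C'$ is a constant chosen below. Apply the weak-type $(1,1)$-maximal inequality (Definition \ref{3MI}) to $x-y_{m_k}$ with $\lambda=2^{-k}$. This yields $e_k\in\P(M)$ with
\[
\tau(e_k^\perp)\leq C\,2^k\|x-y_{m_k}\|_1\leq \varepsilon\, 2^{-k-1}
\qquad\text{and}\qquad
\|e_kV_j(x-y_{m_k})e_k\|_\infty\leq 2^{-k}\quad\text{for all } j,
\]
provided $C'\geq 2C$. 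Since $V_j(y_{m_k})\to0$ b.a.u., there is also $f_k\in\P(M)$ with $\tau(f_k^\perp)\leq\varepsilon 2^{-k-1}$ and $\|f_kV_j(y_{m_k})f_k\|_\infty\to0$ as $j\to\infty$. Set $e=\bigwedge_k(e_k\wedge f_k)$. By the countable version of \eqref{2Tr}, which follows from the finite version together with normality of $\tau$ (the decreasing finite meets converge strongly to $e$, so their complements increase to $e^\perp$), we get $\tau(e^\perp)\leq\sum_k\varepsilon 2^{-k}=\varepsilon$.

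Now fix $k$. Since $e\leq e_k$ and $e\leq f_k$, we have $e=ee_k=ef_k$, so
\[
\|eV_j(x)e\|_\infty\leq \|e\,e_kV_j(x-y_{m_k})e_k\,e\|_\infty+\|e\,f_kV_j(y_{m_k})f_k\,e\|_\infty\leq 2^{-k}+\|f_kV_j(y_{m_k})f_k\|_\infty .
\]
Hence $\limsup_j\|eV_j(x)e\|_\infty\leq 2^{-k}$ for every $k$, so the limit is $0$. Here $eV_j(x)e$ is bounded for $j$ large enough by the same estimate. This gives b.a.u.\ convergence to $0$.

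The only delicate point is the countable meet estimate $\tau\bigl((\bigwedge_k g_k)^\perp\bigr)\leq\sum_k\tau(g_k^\perp)$. This follows because $(\bigwedge_{k\leq n}g_k)^\perp=\bigvee_{k\leq n}g_k^\perp$ increases to $\bigvee_kg_k^\perp$, and the trace is normal. Everything else is routine bookkeeping of constants.
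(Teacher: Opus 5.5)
Your proof is correct and follows essentially the same route as the paper's. Both arguments approximate $x$ with $L^1$-error decaying like $4^{-k}$, apply the weak-type inequality at level $2^{-k}$, intersect with the b.a.u.\ projections for the approximants, and let $k\to\infty$ in the $\limsup$ bound. The differences are cosmetic: you normalize constants to get $\tau(e^\perp)\le\varepsilon$ directly instead of $(1+C)\varepsilon$ (take $C'=\max(2C,1)$ so that $C=0$ causes no division by zero). You also make explicit the linearity of $V_j$ and the normality argument behind the countable version of \eqref{2Tr}, both of which the paper uses tacitly.
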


\begin{proof}
Let $C\geq0$ be a constant for the weak-type $(1,1)$-maximal inequality in Definition \ref{3MI}, and fix $\varepsilon>0$. Passing to a subsequence, 
we may assume that
$\|x-y_m\|_1\leq4^{-m}\varepsilon$,
for all $m\geq1$.
Set $z_m=x-y_m$, so that $x=y_m+z_m$ and
\[
\|z_m\|_1\leq4^{-m}\varepsilon,
\qquad m\geq1.
\]
Applying the weak-type $(1,1)$-maximal inequality to $z_m$ with $\lambda=2^{-m}$, we obtain projections $e_m\in\P(M)$ such that
\(\tau(e_m^\perp)\leq C\varepsilon 2^{-m}\)
and
\[
\|e_mV_j(z_m)e_m\|_\infty\leq2^{-m},
\qquad j\geq1.
\]
Let
\(e=\bigwedge_{m=1}^\infty e_m.
\)
By \eqref{2Tr},
\[
\tau(e^\perp)
\leq\sum_{m=1}^\infty\tau(e_m^\perp)
\leq C\varepsilon.
\]

On the other hand, since for every $m\geq1$, $V_j(y_m)\to0$ b.a.u.\ as $j\to\infty$, for each $m\geq1$ there exists a projection $f_m\in\P(M)$ such that
\[
\tau(f_m^\perp)\leq\varepsilon2^{-m}
\]
and
\[
\|f_mV_j(y_m)f_m\|_\infty\longrightarrow0
\qquad\text{as }j\to\infty.
\]
Set
\(f=\bigwedge_{m=1}^\infty f_m
\ \text{and}\
g=e\wedge f.\)
Again by \eqref{2Tr},
\(\tau(f^\perp)
\leq\sum_{m=1}^\infty\tau(f_m^\perp)
\leq\varepsilon, \)
and hence
\[
\tau(g^\perp)
\leq\tau(e^\perp)+\tau(f^\perp)
\leq(1+C)\varepsilon.
\]

It remains to show that
\(\|gV_j(x)g\|_\infty\longrightarrow0
\ \text{as }j\to\infty.\)
Fix $m\geq1$. Since $x=y_m+z_m$, the triangle inequality gives
\[
\|gV_j(x)g\|_\infty
\leq
\|gV_j(y_m)g\|_\infty
+
\|gV_j(z_m)g\|_\infty.
\]
Since $g\leq f_m$ and $g\leq e_m$, we obtain
\[
\begin{aligned}
\|gV_j(x)g\|_\infty
&\leq
\|f_mV_j(y_m)f_m\|_\infty
+
\|e_mV_j(z_m)e_m\|_\infty\\
&\leq
\|f_mV_j(y_m)f_m\|_\infty+2^{-m}.
\end{aligned}
\]
Therefore,
\[
\limsup_{j\to\infty}\|gV_j(x)g\|_\infty
\leq2^{-m}.
\]
Since $m\geq1$ was arbitrary, letting $m\to\infty$ yields \(\lim_{j\to\infty}\|gV_j(x)g\|_\infty = 0\), which completes the proof.
\end{proof}

We now apply the preceding Banach principle to weighted ergodic averages. We retain the notation from Lemma \ref{2Gamma1}.

\begin{corollary}\label{3Y2}
Let $T\colon M\to M$ be a positive Dunford--Schwartz operator. Let
$d=(d_n)_{n\geq1}$ be a non-zero sequence of non-negative real numbers,
and let $n_0\geq1$ be such that $d_{n_0}\neq0$. Define the weighted
averages
\[
A_N^{(d)}(T)=\frac{1}{W_N}\sum_{n=1}^N d_nT^n,
\qquad
W_N^{(d)}=\sum_{n=1}^N d_n,
\qquad N\geq n_0.
\]
Assume that $(A_N^{(d)}(T_1))_{N\geq n_0}$ satisfies a weak-type
$(1,1)$-maximal inequality and that
\[
[A_N^{(d)}(T_2)](y)\longrightarrow P_2(y)
\qquad\text{b.a.u.\ as }N\to\infty
\]
for all $y\in L^2(M)$. Then, for every $x\in L^1(M)$,
\[
[A_N^{(d)}(T_1)](x)\longrightarrow P_1(x)
\qquad\text{b.a.u.\ as }N\to\infty.
\]
\end{corollary}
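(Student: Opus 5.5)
We plan to apply Lemma \ref{3Y1} to the maps $V_N = A_N^{(d)}(T_1) - P_1$, $N\geq n_0$. We read the normalization in $A_N^{(d)}$ as $W_N^{(d)}$; the statement writes $W_N$, which agrees with $W_N^{(d)}$ in the intended application $d_n=n^{-\alpha}$. The argument has three steps: first show that $(V_N)$ satisfies a weak-type $(1,1)$-maximal inequality, then produce a dense subset of $L^1(M)$ on which $V_N\to0$ b.a.u., and finally invoke Lemma \ref{3Y1}.

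\emph{Maximal inequality.} By hypothesis there is a constant $C$ for the family $(A_N^{(d)}(T_1))_N$. The single contraction $P_1$ of Lemma \ref{2Gamma1} also satisfies a weak-type estimate, by the Chebyshev-type argument below. For positive $x$, the first step is to check that $P_1(x)\geq 0$. This holds because $P_2$ is an $L^2$-limit of the positive maps $A_N(T_2)$, and positivity passes to $P_1$ by density. We then take $e=\chi_{[0,\lambda]}(P_1x)$, which gives $\tau(e^\perp)\leq \|P_1x\|_1/\lambda\leq \|x\|_1/\lambda$ and $\|eP_1(x)e\|_\infty\leq\lambda$.

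Next we combine the two estimates. We apply each with $\lambda/2$ to get projections $e_1$ and $e_2$, and set $e=e_1\wedge e_2$. Then \eqref{2Tr} gives a weak-type inequality for positive $x$ with constant $2(C+1)$, and Remark \ref{3Pos} extends it to all $x\in L^1(M)$.

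\emph{Dense class.} We take $\mathcal D=L^1(M)\cap L^2(M)$, which is dense in $L^1(M)$. For $y\in\mathcal D$ we need consistency of the extensions. Since $T_1$ and $T_2$ agree on $L^1\cap L^2$ (both extend $T$ on $M\cap L^1\cap L^2$ and are continuous), we get $A_N^{(d)}(T_1)y=A_N^{(d)}(T_2)y$. By the construction in Lemma \ref{2Gamma1}, $P_1y=P_2y$. Hence $V_N(y)=A_N^{(d)}(T_2)(y)-P_2(y)\to0$ b.a.u. by hypothesis.

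The only delicate point is the consistency check $T_1=T_2$ on $L^1\cap L^2$. This follows by approximating $y$ simultaneously in $L^1$ and $L^2$ by elements of $M\cap L^1(M)$, for instance by spectral truncations of the real and imaginary parts of $y$. Lemma \ref{3Y1} then gives $V_N(x)\to0$ b.a.u. for every $x\in L^1(M)$, which is the claim.
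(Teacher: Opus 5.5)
Your argument is correct, but it organizes the Banach-principle step differently from the paper.

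The paper applies Lemma~\ref{3Y1} to the operators $A_N^{(d)}(T_1)$ themselves. It approximates $x-P_1(x)$ in $L^1(M)$ by $y_m=z_m-P_2(z_m)$ with $z_m\in L^1(M)\cap L^2(M)$. Then $P_2(y_m)=0$, and the hypothesis gives $A_N^{(d)}(T_1)(y_m)\to0$ b.a.u. The remaining piece is handled through the fixed-point identity $T_1P_1=P_1$, which yields $A_N^{(d)}(T_1)(P_1(x))=P_1(x)$.

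You instead apply the lemma to $A_N^{(d)}(T_1)-P_1$. This costs you a weak-type bound for the single contraction $P_1$, which your Chebyshev argument supplies cheaply. In return you dispense with the invariance $T_1P_1=P_1$ altogether. It also makes your proof insensitive to the normalization. The paper's identity $A_N^{(d)}(T_1)(P_1(x))=P_1(x)$ requires the normalizing factor to be $W_N^{(d)}$. With the literal $W_N$ one gets $(W_N^{(d)}/W_N)P_1(x)$, and would additionally need $W_N^{(d)}/W_N\to1$ together with Remark~\ref{2tn}. Your argument uses only the two hypotheses as stated.

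So your caveat about reading $W_N$ as $W_N^{(d)}$ is unnecessary. Note also that the corollary is applied in Theorem~\ref{5Main} with $d_n=X_n(\omega)$, not with $d_n=n^{-\alpha}$. In that application $W_N^{(d)}$ and $W_N$ are only asymptotically equivalent, by \eqref{2SLLN}.
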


\begin{proof}
Let $x\in L^1(M)$. By the density of $L^1(M)\cap L^2(M)$ in $L^1(M)$, choose a sequence $(z_m)_{m\geq1}$ in $L^1(M)\cap L^2(M)$ such that $\|x-z_m\|_1\to0$ as $m\to\infty$. Since $P_1$ is contractive and agrees with $P_2$ on $L^1(M)\cap L^2(M)$, we have $\|P_1(x)-P_2(z_m)\|_1\leq\|x-z_m\|_1\to0$. Set
\[
y_m = z_m - P_2(z_m),\qquad m\geq 1.
\]
Then $y_m\in L^1(M)\cap L^2(M)$, $P_2(y_m)=0$, and
\[
\|(x-P_1(x))-y_m\|_1
\leq\|x-z_m\|_1+\|P_1(x)-P_2(z_m)\|_1
\longrightarrow0 \qquad \text{as } m\to\infty.
\]
By hypothesis, \[A_N^{(d)}(T_1)(y_m)=A_N^{(d)}(T_2)(y_m)\longrightarrow P_2(y_m)=0
\qquad\text{b.a.u.\ as }N\to\infty.\] Applying Lemma~\ref{3Y1}, we obtain
\[
A_N^{(d)}(T_1)(x-P_1(x))\longrightarrow0
\qquad\text{b.a.u.\ as }N\to\infty.
\]
Finally, by the definition of $P_1$,
we have  \(T_1 P_1(x)=P_1(x)\), which implies 
that 
$$
A_N^{(d)}(T_1)(P_1(x))=P_1(x),
\qquad N\geq n_0.
$$
Consequently, 
\[
A_N^{(d)}(T_1)(x)\longrightarrow P_1(x)
\qquad\text{b.a.u.\ as }N\to\infty.\qedhere
\]
\end{proof}

\subsection{A transference principle for Jordan isomorphisms}\label{sec:transference}

We now establish the transference principle that will allow us to pass the weak-type $(1,1)$-maximal inequalities obtained for the shift on $\ell^1_{\mathbb Z}(L^1(M))$ to the iterates of a trace-preserving Jordan isomorphism on $L^1(M)$. The argument follows the non-commutative transference framework of \cite{HLW}.

Set
\(\mathcal N:=\ell^\infty_{\mathbb Z}(M)
\)
and equip $\mathcal N$ with the normal semifinite faithful trace
\begin{equation}\label{2Tau-Infty}
\tau_\infty\bigl((x(k))_{k\in\mathbb Z}\bigr)
=\sum_{k\in\mathbb Z}\tau(x(k)),
\qquad (x(k))_{k\in\mathbb Z}\in\mathcal N_+.
\end{equation}
Then, for every $1\leq p\leq\infty$,
\[
L^p(\mathcal N)=\ell^p_{\mathbb Z}(L^p(M))
\quad\hbox{isometrically}.
\]
The shift
\[
S\colon\mathcal N\to\mathcal N,\qquad
S\bigl((y(k))_{k\in\mathbb Z}\bigr)
=(y(k-1))_{k\in\mathbb Z},
\]
is a positive Dunford--Schwartz operator. We will transfer estimates for $S$ to a Jordan isomorphism. Recall that a bounded map $J\colon M\to M$ is a Jordan homomorphism if
\[
J(x^*)=J(x)^*,\qquad J(x^2)=J(x)^2,\qquad x\in M.
\]
We refer to \cite{HOS} for the basic properties of such maps. In particular, every Jordan homomorphism is a positive contraction and satisfies
\begin{equation}\label{3Jordan}
J(yxy)=J(y)J(x)J(y),\qquad x,y\in M.
\end{equation}
Moreover, every injective Jordan homomorphism is an isometry, and a bijective Jordan homomorphism is called a Jordan isomorphism.

In the sequel, fix a trace-preserving Jordan isomorphism
\(J\colon M\to M.
\)
Its inverse is again a trace-preserving Jordan homomorphism. Hence both $J$ and $J^{-1}$ are Dunford--Schwartz operators, and their $L^1$-extensions $J_1$ and $J_1^{-1}$ are contractions. It follows that
\(J_1\colon L^1(M)\to L^1(M)\)
is an onto isometry.

Given a sequence $(c_n)_{n\ge1}$ of non-negative real numbers, consider
\[
V_N=\sum_{n=1}^N c_nS^n,
\qquad
V'_N=\sum_{n=1}^N c_nJ^n,
\qquad N\ge1.
\]
These operators extend to $\ell^1_{\mathbb Z}(L^1(M))=L^1(\mathcal N)$ and $L^1(M)$, respectively, and we use the same notation for their $L^1$-extensions.

\begin{lemma}[Transference principle]\label{lem:transference}
If the sequence $(V_N)_{N\ge1}$ satisfies a weak-type $(1,1)$-maximal inequality, then the sequence $(V'_N)_{N\ge1}$ also satisfies a weak-type $(1,1)$-maximal inequality.
\end{lemma}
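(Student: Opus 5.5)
We follow the classical Calderón transference scheme, adapted to projections as in \cite{HLW}. By Remark \ref{3Pos} it suffices to treat $x\in L^1(M)_+$. By a standard limiting argument (approximating the full family by finitely many indices and taking decreasing intersections of projections, using normality of $\tau$), it also suffices to prove the estimate for the finite family $(V'_N)_{1\le N\le N_0}$ with a constant $C'$ independent of $N_0$. Let $C$ be the constant for $(V_N)$, fix $N_0$, $\lambda>0$, and an auxiliary integer $L\ge1$.

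\textbf{Step 1: lifting.} Define $y\in\ell^1_{\mathbb Z}(L^1(M))_+$ by
\[
y(k)=J_1^{k}(x)\quad\text{for } -L-N_0\le k\le L,\qquad y(k)=0 \text{ otherwise}.
\]
Then $\|y\|_1=(2L+N_0+1)\|x\|_1$, because $J_1$ is an isometry. For $1\le N\le N_0$ and $|k|\le L$ we have
\[
[V_N(y)](k)=\sum_{n=1}^N c_n\,y(k-n)=J_1^{k}\Bigl(\sum_{n=1}^N c_n J_1^{-n}(x)\Bigr).
\]
To land on $V'_N(x)$ rather than on the averages built from $J^{-1}$, we instead put $y(k)=J_1^{-k}(x)$. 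Then $[V_N(y)](k)=J_1^{-k}(V'_N(x))$ for $|k|\le L$ (the index range is adjusted accordingly). This identity uses linearity and the group property of $J_1$ only.

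\textbf{Step 2: apply the hypothesis and pull back.} The hypothesis applied to $y$ gives a projection $E=(e(k))_k\in\mathcal N$ with
\[
\sum_k\tau(e(k)^\perp)\le \frac{C\|y\|_1}{\lambda}
\qquad\text{and}\qquad
\|e(k)\,J_1^{-k}(V'_N(x))\,e(k)\|_\infty\le\lambda
\]
for all $|k|\le L$ and $N\le N_0$. Set $f_k=J^{k}(e(k))$. Since Jordan isomorphisms map projections to projections and preserve $\tau$, we get $\tau(f_k^\perp)=\tau(e(k)^\perp)$. The key identity \eqref{3Jordan}, $J(pzp)=J(p)J(z)J(p)$, extended to $\tau$-measurable $z$ via the $L^1$/$L^0$ extension, yields
\[
f_k\,V'_N(x)\,f_k=J^{k}\bigl(e(k)\,J_1^{-k}(V'_N(x))\,e(k)\bigr),
\]
whose norm is at most $\lambda$, because $J^k$ is an isometry on $M$.

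\textbf{Step 3: averaging.} Since $\sum_{|k|\le L}\tau(f_k^\perp)\le C(2L+N_0+1)\|x\|_1/\lambda$, some $k_0$ satisfies
\[
\tau(f_{k_0}^\perp)\le \frac{C(2L+N_0+1)}{2L+1}\cdot\frac{\|x\|_1}{\lambda}.
\]
Letting $L\to\infty$, this gives a projection $e=f_{k_0}$ with $\tau(e^\perp)\le 2C\|x\|_1/\lambda$ and $\|eV'_N(x)e\|_\infty\le\lambda$ for $N\le N_0$.

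\textbf{Step 4: removing $N_0$.} This is the main obstacle, since the projection chosen depends on $N_0$. The fix is to apply Step 3 with $\lambda$ and obtain projections $e_{N_0}$, then pass to a weak$^*$ cluster point, or equivalently use that the condition "$\|eze\|\le\lambda$" is preserved under decreasing intersections. The cleaner route I would try first is the known reformulation: $\|eze\|_\infty\le\lambda$ for a positive $z$ is equivalent to $eze\le\lambda e$, and the family of such inequalities passes to limits. Following \cite{HLW} or \cite[Section 6]{JX}, one uses that a weak-type maximal estimate for all finite families with uniform constant implies it for the whole family, up to a factor of 2 in the constant.

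A second delicate point is justifying \eqref{3Jordan} for unbounded $z\in L^1(M)$. Here we rely on normality of $J$ and on approximating $z$ by truncations $z\,\chi_{[0,n]}(z)$, which preserves the inequality $f z f\le\lambda f$ in the limit.
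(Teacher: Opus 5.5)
Your proposal is essentially the paper's proof step for step: reduction to finite families via \cite[Lemma 2.3]{HLW}, the lift $y(j)=J^{-j}(x)$ on the window $F-K$ (your $[-L-N_0,L]$), the identity $[V_N(y)](k)=J^{-k}(V'_N(x))$ for $k\in F$, an averaging choice of $k_0$, and the pull-back $f=J^{k_0}(e(k_0))$ using \eqref{3Jordan} and trace preservation. Your remark on extending \eqref{3Jordan} to unbounded elements spells out a point the paper leaves implicit; it follows at once from $L^1$-density of $M\cap L^1(M)$ and $L^1$-continuity of both sides. Your Step~4 heuristics, however, do not work as written: the projections $e_{N_0}$ are not nested, and $eze\le\lambda e$ is not linear in $e$, so it does not pass to weak$^*$ limits (you would first need to rewrite it as $z^{1/2}ez^{1/2}\le\lambda$ and then take a spectral projection of the limit). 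That step should therefore rest on \cite[Lemma 2.3]{HLW}, as in the paper.
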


\begin{proof}
Let $C\ge0$ be a constant for the weak-type $(1,1)$-maximal inequality of $(V_N)_{N\ge1}$. By \cite[Lemma 2.3]{HLW}, it is enough to prove the corresponding estimate uniformly for every finite family
\((V'_1,\ldots,V'_{N_0}),
\)
with a constant independent of $N_0$. Fix $N_0\ge1$ and set
\[
K=\{1,\ldots,N_0\},
\qquad
F=\{-m,\ldots,m\},
\]
where $m$ is chosen so that
\(
\frac{|F-K|}{|F|}\le2.
\)
For $x\in L^1(M)_+$ and $\lambda>0$, define
\[
y=(y(j))_{j\in\mathbb Z}\in\ell^1_{\mathbb Z}(L^1(M)),
\qquad
y(j)=\chi_{F-K}(j)J^{-j}(x).
\]
Since $J_1$ is an isometry,
\begin{equation}\label{3L1}
\|y\|_1
=\sum_{j\in F-K}\|J^{-j}(x)\|_1
=|F-K|\, \|x\|_1.
\end{equation}
For $k\in F$ and $1\le N\le N_0$, we have
\[
\begin{aligned}
[V_N(y)](k)=\sum_{n=1}^N c_n y(k-n)=\sum_{n=1}^N c_nJ^{-(k-n)}(x)=\sum_{n=1}^N c_nJ^{n-k}(x)=J^{-k}(V'_N(x)).
\end{aligned}
\]
Thus, on \(F\), \(V'_N(x)\) is recovered from the corresponding coordinates of \(V_N(y)\) via \(J^k\). Applying the weak-type $(1,1)$-maximal inequality for $(V_N)_{N\ge1}$ to $y$, we obtain a projection
\[
e=(e(j))_{j\in\mathbb Z}\in\mathcal P(\mathcal N)
\]
such that
\[
\tau_\infty(e^\perp)\le\frac{C\|y\|_1}{\lambda},
\qquad
\|eV_N(y)e\|_\infty\le\lambda,
\quad N\ge1.
\]
Choose $k_0\in F$ so that $\tau(e(k_0)^\perp)$ is minimal. Then

\[
\tau(e(k_0)^\perp)\le \frac1{|F|}\sum_{k\in F}\tau(e(k)^\perp)
\le \frac1{|F|}\tau_\infty(e^\perp)
\le \frac{C|F-K|}{\lambda|F|}\|x\|_1
\le \frac{2C}{\lambda}\|x\|_1.
\]

As $J$ maps projections to projections, we have \(J^{k_0}(e(k_0))\in\mathcal P(M)\). Set \(f=J^{k_0}(e(k_0))\).
The trace-preserving property of $J$ gives
\[
\tau(f^\perp)\le\frac{2C}{\lambda}\|x\|_1.
\]

It remains to transfer the uniform bound. Using the triple homomorphism property \eqref{3Jordan} and the identity above,
\begin{align*} 
f V'_N(x) f &= J^{k_0}(e(k_0)) J^{k_0}J^{-k_0} (V'_N(x)) J^{k_0}(e(k_0)) \\ &= J^{k_0}\bigl(e(k_0) J^{-k_0}(V'_N(x)) e(k_0)\bigr) \\ &= J^{k_0}\bigl(e(k_0) (V_N(y))(k_0) e(k_0)\bigr) \\ &= J^{k_0}\bigl([e V_N(y) e](k_0)\bigr). 
\end{align*}
Since $J^{k_0}$ is an isometry on $M$,
 \[ \|f V'_N(x) f\|_\infty = \|[e V_N(y) e](k_0)\|_\infty \le \|e V_N(y) e\|_\infty \le \lambda. \]
Therefore, $(V'_N)_{1\le N\le N_0}$ satisfies the weak-type $(1,1)$-maximal inequality with constant $2C$, uniformly in $N_0$. 
\end{proof}

\section{A criterion for weak-type
$(1,1)$-maximal inequality}\label{4}

The aim of this section is to establish a non-commutative analogue of
\cite[Proposition 3.1]{LaV}, stated below as Proposition \ref{4Main}.
We first develop a Calder\'on-Zygmund type decomposition adapted to the
setting of this paper, and then establish a non-commutative $L^2$-space
version of \cite[Lemma 4.2]{LaV}.

\subsection{A Calderón-Zygmund type decomposition}\label{CZ}

As before, we fix a tracial von Neumann algebra $(M,\tau)$ and set
$$
\N=\ell^\infty_{\mathbb Z}(M).
$$
We equip $\N$ with the normal semifinite faithful trace $\tau_\infty$
defined in (\ref{2Tau-Infty}).

For $s\in\Ndb_0$ and $k\in\Zdb$, define the dyadic intervals
$$
Q_{s,k}
=\{k2^s,\ldots,(k+1)2^s-1\}
=[k2^s,(k+1)2^s)\cap\Zdb.
$$
For each $s$, the family $(Q_{s,k})_{k\in\mathbb Z}$ is a partition of
$\Zdb$, and each $Q_{s,k}$ contains $2^s$ elements.

Following \cite[Section 2.3]{HLX}, for every $s\geq0$, let $\N_s$ be the
space of all $x=(x(m))_{m\in\mathbb Z}$ in $\N$ that are constant on
each $Q_{s,k}$; that is,
$x(m)=x(m')$ whenever there exists $k\in\mathbb Z$ such that
$m,m'\in Q_{s,k}$.
Each $\N_s$ is a sub-von Neumann algebra of $\N$, with
$\N_0=\N$ and $\N_{s+1}\subset\N_s$ for all $s\geq0$. Thus,
$(\N_s)_{s\geq0}$ is a reversed filtration of $\N$.
Let $(\Edb_s)_{s\geq0}$ be the associated conditional expectations.
For any $x\in\N$, $s\geq0$, and $k\in\Zdb$, the value of
$\Edb_s(x)$ on $Q_{s,k}$ is the arithmetic mean of
$(x(m))_{m\in Q_{s,k}}$. Hence,
\begin{equation}\label{4x2}
\Edb_s(x)\leq 2\Edb_{s+1}(x),
\qquad s\geq0,\quad x\in\N_+.
\end{equation}

We use Cuculescu's theorem for martingales \cite{C}, adapted to
reversed dyadic martingales as in \cite[Subsection 2.3]{HLX};
see also \cite[Subsection 3.1]{P} and
\cite[Subsection 1.1]{CCAP}.
Let $\widetilde{\N}\subset\N$ be the set of all
$x=(x(m))_{m\in\mathbb Z}\in
\ell^\infty_{\mathbb Z}(M\cap L^1(M))$ with finite support, that is, $\{m\in\mathbb Z:x(m)\ne0\}$ is finite. 
For $x\in\widetilde{\N}_+$ and $\lambda>0$, Cuculescu's theorem
ensures the existence of a sequence $(q_s)_{s\geq0}$ of projections in
$\N$ and an integer $s_0\geq1$, depending on $(x,\lambda)$, such that
\begin{equation}\label{4Increase}
q_0\leq q_1\leq\cdots\leq q_s\leq\cdots,
\qquad\hbox{and}\qquad q_s=1\quad\forall s\geq s_0;
\end{equation}
\begin{equation}\label{4qs}
q_s\in N_s,\qquad s\geq0;
\end{equation}
\begin{equation}\label{4Lambda-qs}
q_s\mathbb{E}_s(x)q_s\leq\lambda q_s,\qquad s\geq0;
\end{equation}
\begin{equation}\label{4L1-estimate}
\tau(q_0^\perp)\leq\frac{\|x\|_1}{\lambda}.
\end{equation}

\begin{lemma}\label{4CZ-Dec}
Let $x,\lambda$ and $(q_s)_{s\geq0}$ be as above, and set
$$
p_s:=q_{s+1}-q_s,\qquad s\geq0.
$$
Then:
\begin{itemize}
\item[(1)] The $p_s$ are pairwise orthogonal projections, and
\begin{equation}\label{4Sumps}
q_0^\perp=\sum_{s=0}^\infty p_s.
\end{equation}

\item[(2)] We have
$$
x=g+a+b,
$$
where
\[
g:=q_0xq_0,\qquad
a:=\sum_{s=0}^\infty p_sxp_s,\qquad
b:=\sum_{s=0}^\infty(p_sxq_s+q_sxp_s).
\]

\item[(3)] We have
$$
\norm{g}_\infty\leq\lambda.
$$
\end{itemize}
\end{lemma}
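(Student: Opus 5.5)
The proof is essentially bookkeeping with the projections $(q_s)$, using the properties \eqref{4Increase}--\eqref{4L1-estimate}.

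For (1), \eqref{4Increase} says $q_s\le q_{s+1}$, so each $p_s=q_{s+1}-q_s$ is a projection. For $s<t$ we have $p_s\le q_{s+1}\le q_t$ and $p_t\le q_t^\perp$, hence $p_sp_t=0$ and the $p_s$ are pairwise orthogonal. The series telescopes:
$$
\sum_{s=0}^{S}p_s=q_{S+1}-q_0 ,
$$
and $q_{S+1}=1$ as soon as $S+1\ge s_0$. So the sum in \eqref{4Sumps} is finite and equals $1-q_0=q_0^\perp$.

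For (2), write $1=q_0+\sum_s p_s$ and expand
$$
x=\Bigl(q_0+\sum_s p_s\Bigr)x\Bigl(q_0+\sum_t p_t\Bigr)
=q_0xq_0+\sum_{s,t}p_sxp_t+\sum_s\bigl(p_sxq_0+q_0xp_s\bigr).
$$
All sums are finite, since $p_s=0$ for $s\ge s_0$, so there are no convergence issues; each term lies in $L^1(\N)\cap\N$ because $x\in\widetilde{\N}$.

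The main step is to regroup the off-diagonal part according to the larger index. For each $s$ we have $q_s=q_0+\sum_{t<s}p_t$. Therefore
$$
p_sxq_s=p_sxq_0+\sum_{t<s}p_sxp_t, \qquad q_sxp_s=q_0xp_s+\sum_{t<s}p_txp_s .
$$
Summing over $s$ recovers exactly $\sum_s(p_sxq_0+q_0xp_s)+\sum_{s\neq t}p_sxp_t$, and each pair $(s,t)$ with $s\neq t$ is counted once via $\max(s,t)$. The diagonal terms give $a$. Hence $x=g+a+b$.

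For (3), use \eqref{4Lambda-qs} with $s=0$, noting $\Edb_0$ is the identity since $\N_0=\N$. This gives $0\le q_0xq_0\le\lambda q_0$ because $x\ge0$. For a positive operator, $0\le g\le\lambda q_0\le\lambda$ implies $\|g\|_\infty\le\lambda$.

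The only delicate point is the index bookkeeping in (2), namely checking that the $b$-terms capture each off-diagonal block exactly once. The other steps are direct.
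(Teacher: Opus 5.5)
Your proof is correct and is essentially the paper's argument: (1) and (3) follow directly from \eqref{4Increase} and from \eqref{4Lambda-qs} at $s=0$ with $\Edb_0$ the identity, and for (2) you use the same identity $q_s=q_0+\sum_{t<s}p_t$ to regroup the off-diagonal blocks. The only difference is the direction of the computation: the paper starts from $a+b$ and recognizes $(1-q_0)xq_0+q_0x(1-q_0)+(1-q_0)x(1-q_0)=x-q_0xq_0$, whereas you expand $x=(q_0+\sum_s p_s)\,x\,(q_0+\sum_t p_t)$ and regroup, and you also state explicitly that all sums are finite because $p_s=0$ for $s\ge s_0$, which the paper leaves implicit.
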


\begin{proof}
Part (1) follows from (\ref{4Increase}), while part (3) follows from
(\ref{4Lambda-qs}), since $\Edb_0(x)=x$.

For part (2), observe that, for every $s\geq0$,
$$
q_s=q_0+\sum_{j=0}^{s-1}p_j.
$$
Thus,
\begin{align*}
\sum_{s=0}^{\infty}p_sxq_s
&=
\sum_{s=0}^{\infty}
\left(p_sxq_0+\sum_{j=0}^{s-1}p_sxp_j\right)\\
&=
\sum_{s=0}^{\infty}p_sxq_0
+
\sum_{0\leq j<s}p_sxp_j,
\end{align*}
and similarly,
\begin{align*}
\sum_{s=0}^{\infty}q_sxp_s
&=
\sum_{s=0}^{\infty}q_0xp_s
+
\sum_{0\leq s<j}p_sxp_j.
\end{align*}
Therefore, by (\ref{4Sumps}),
\begin{align*}
a+b
&=
\sum_{s=0}^{\infty}(p_sxq_0+q_0xp_s)
+
\sum_{s,j\geq0}p_sxp_j\\
&=
(1-q_0)xq_0
+
q_0x(1-q_0)
+
(1-q_0)x(1-q_0)\\
&=x-q_0xq_0=x-g.
\end{align*}
\end{proof}

Part (2) of the previous lemma is a Calderón-Zygmund decomposition for $x$. It should be noted that we will not need the cancellation property that is often required in this type of decomposition (see, for example, \cite{CCAP, P}). Consequently, the good part $g$ is slightly different from usual.

\subsection{A non-commutative $L^2$-version of LaVictoire's lemma}\label{LaVictoire}

We begin by establishing some notation.
For any $\nu\in\ell^\infty_{\mathbb Z}$, we let
${\rm supp}(\nu)\subset\Zdb$ denote the set of integers where
$\nu$ does not vanish. For any positive $x$ in a non-commutative
$L^p$-space, we let ${\mathfrak s}(x)$ denote the support projection
of $x$, defined as the smallest $e$ such that $exe=x$.
Thus, for any
$\nu\in{\ell^\infty_{\mathbb Z}}_+$, we have
${\mathfrak s}(\nu) = \chi_{{\rm supp}(\nu)}$.

Next, for any $\nu = (\nu(n))_{n\in\mathbb Z}\in\ell^2_{\mathbb Z}$, we define
$\widetilde{\nu} = \bigl(\overline{\nu(-n)}\bigr)_{n\in\mathbb Z}$
in $\ell^2_{\mathbb Z}$. We observe that
$$
(\nu * \widetilde{\nu})(0)= \norm{\nu}_2^2\geq 0.
$$

If $\nu$ is finitely supported, we can define
$\varphi *\nu \in \ell^p_{\mathbb Z}(L^p(M))$ for any
$\varphi\in\ell^p_{\mathbb Z}(L^p(M))$
and any $1\leq p\leq \infty$.
We note that on the Hilbert space $\ell^2_{\mathbb Z}(L^2(M))$,
the adjoint of the operator $\varphi\mapsto \varphi *\nu$ is the mapping
$\varphi\mapsto \varphi * \widetilde{\nu}$.
Indeed, let $(\cdotp\,\vert\,\cdotp)$ denote the inner product on
$\ell^2_{\mathbb Z}(L^2(M))$. Then,
for any $\varphi=(\varphi(k))_{k\in\mathbb Z}$
and $\psi=(\psi(l))_{l\in\mathbb Z}$ in $\ell^2_{\mathbb Z}(L^2(M))$,
we have
\begin{align}\label{4Adjoint}
(\varphi \vert \psi *\nu) & = \sum_{k\in\mathbb Z}\tau\bigl((\psi*\nu)(k)^* \varphi(k)\bigr)\\ \nonumber
& = \sum_{k\in\mathbb Z}
\sum_{j\in\mathbb Z}\overline{\nu(j)}\,
\tau\bigl(\psi(k-j)^* \varphi(k)\bigr)\\ \nonumber
& = \sum_{k\in\mathbb Z}\sum_{l\in\mathbb Z}
\overline{\nu(k-l)}\,\tau\bigl(\psi(l)^* \varphi(k)\bigr)\\ \nonumber
& = (\varphi*\widetilde{\nu}\vert \psi). \nonumber
\end{align}

Throughout the rest of this section, we fix two non-zero
sequences $(\mu_j)_{j\geq 1}$
and $(\nu_j)_{j\geq 1}$
in $\ell^\infty_{\mathbb Z}$ such that, for all
$j\geq 1$,
\begin{equation}\label{4Support}
{\rm supp}(\mu_j)\subset \{1,\ldots, 2^j\},
\qquad
{\rm supp}(\nu_j)\subset \{1,\ldots, 2^j\},
\end{equation}
\begin{equation}\label{4Positive}
\mu_j\geq 0\qquad\hbox{and}\qquad
\mu_j-\nu_j\geq 0.
\end{equation}
For every $j\geq 1$, set
$$
r_j := {\rm Card}\bigl({\rm supp}(\mu_j)\bigr).
$$
We assume that the sequence $(r_j)_{j\geq 1}$ is non-decreasing
and that there exists a constant $K>0$ such that
\begin{equation}\label{4Domination}
\sum_{j=1}^k r_j\,\leq K r_k,\qquad k\geq 1.
\end{equation}
We further assume
that there exist two constants $K_1,K_2>0$, as well as
$\varepsilon>0$, such that
\begin{equation}\label{4Zero}
(\nu_j * \widetilde{\nu}_j)(0)\leq K_1 r_{j}^{-1},\qquad j\geq 1,
\end{equation}
and
\begin{equation}\label{4NonZero}
\bigl\vert (\nu_j * \widetilde{\nu}_j)(n)\bigr\vert\leq K_2 2^{-(1+\varepsilon)j},
\qquad j\geq 1, \, n\in\Zdb^*.
\end{equation}
The first terms of the sequence $(r_j)_{j\geq 0}$ may be zero, so
(\ref{4Zero}) should be interpreted as holding for sufficiently large $j$.

The next statement is a non-commutative analog of \cite[Lemma 3.2]{LaV}. Here, and later in the paper,
the symbol $\lesssim$ is used for inequalities
up to an absolute constant.

\begin{lemma}\label{analog LaV prop 3.1}
Let $\psi\in\widetilde{\N}$.
For every $j\geq 1$, set
$$
\sigma_j(\psi)=\sup_{k,v}\Biggl\{\biggnorm{
\sum_{m\in Q_{j,k}}v(m)\,\psi(m)}_\infty\Biggr\},
$$
where the supremum runs over all $k\in\Zdb$ and all
functions $v\colon Q_{j,k}\to\mathbb C$ such that $\norm{v}_\infty\leq 1$.
Then, for any $\varphi\in\widetilde{\N}$, we have the estimate
\begin{equation}\label{4Key}
\bigl|( \varphi*\nu_j\,\vert \,\psi*\nu_j)\bigr|
\lesssim
r_j^{-1}\,|( \varphi \,\vert\, \psi )|
+
\sigma_j(\psi) 2^{-(1+\varepsilon)j}\,
\|\varphi\|_1,\qquad j\geq 1.
\end{equation}
\end{lemma}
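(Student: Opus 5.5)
We use the adjoint identity (\ref{4Adjoint}) to move both convolutions onto one side. Writing $\rho_j:=\nu_j*\widetilde{\nu}_j$, we get
$$
(\varphi*\nu_j\,\vert\,\psi*\nu_j)=(\varphi*\nu_j*\widetilde{\nu}_j\,\vert\,\psi)=(\varphi*\rho_j\,\vert\,\psi).
$$
This is legitimate because $\varphi,\psi\in\widetilde{\N}$ are finitely supported with values in $M\cap L^1(M)\subset L^2(M)$, and $\nu_j$ is finitely supported. We then split $\rho_j=\rho_j(0)\delta_0+\rho_j'$, where $\rho_j'$ vanishes at $0$. The diagonal part contributes $\rho_j(0)(\varphi\vert\psi)$. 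Since $0\le\rho_j(0)\le K_1r_j^{-1}$ by (\ref{4Zero}), its modulus is at most $K_1r_j^{-1}|(\varphi\vert\psi)|$, which is the first term of (\ref{4Key}).

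For the off-diagonal part we expand
$$
(\varphi*\rho_j'\,\vert\,\psi)=\sum_{m\in\Zdb}\sum_{n\neq0}\rho_j(n)\,\tau\bigl(\psi(m)^*\varphi(m-n)\bigr)=\sum_{l\in\Zdb}\tau\Bigl(\Bigl(\sum_{m\ne l}\overline{\rho_j(m-l)}\,\psi(m)\Bigr)^{*}\varphi(l)\Bigr).
$$
We now use the support of $\rho_j$: by (\ref{4Support}), $\rho_j(n)=0$ unless $|n|<2^j$. So for fixed $l$, the inner sum involves only $m\in(l-2^j,l+2^j)$, and this range is covered by at most three consecutive dyadic intervals $Q_{j,k}$. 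On each such $Q_{j,k}$ we write $\overline{\rho_j(m-l)}=K_22^{-(1+\varepsilon)j}v(m)$. By (\ref{4NonZero}) the function $v$ (set to $0$ at $m=l$) satisfies $\norm{v}_\infty\le1$. Hence
$$
\Bignorm{\sum_{m\ne l}\overline{\rho_j(m-l)}\,\psi(m)}_\infty\le 3K_2\,2^{-(1+\varepsilon)j}\sigma_j(\psi).
$$
Hölder's inequality (\ref{2H}) with $p=1$ bounds each term of the sum over $l$ by this quantity times $\norm{\varphi(l)}_1$. Summing over $l$ gives $3K_2\,2^{-(1+\varepsilon)j}\sigma_j(\psi)\norm{\varphi}_1$, since $\norm{\varphi}_1=\sum_l\norm{\varphi(l)}_1$ in $\ell^1_{\Zdb}(L^1(M))$.

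The only point needing care is this last step. We must bound the operator norm of a \emph{weighted} block sum of $\psi$ rather than estimate each $\psi(m)$ separately, which would cost a factor $2^j$ and destroy the estimate. The definition of $\sigma_j$ is designed precisely to absorb arbitrary bounded weights on a dyadic block. All remaining steps are rearrangements of finite sums and are routine. The implicit constant depends only on $K_1$ and $K_2$, which we treat as fixed data.
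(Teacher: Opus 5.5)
Your proposal is correct and follows essentially the same route as the paper: the adjoint identity (\ref{4Adjoint}) to form $\nu_j*\widetilde{\nu}_j$, then the split into a diagonal term controlled by (\ref{4Zero}) and an off-diagonal kernel controlled by (\ref{4NonZero}). After that, both arguments use that each point only interacts with the three neighbouring blocks $Q_{j,k-1}\cup Q_{j,k}\cup Q_{j,k+1}$, and conclude with $\sigma_j(\psi)$ and H\"older. The only cosmetic difference is that you fix the individual point $l$ of $\varphi$, whereas the paper decomposes $\varphi=\sum_k\varphi\chi_{Q_{j,k}}$ into dyadic blocks; both give the same factor $3$.
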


\begin{proof}
Using (\ref{4Adjoint}) with $\nu_j$ instead of $\nu$
and $\varphi *\nu_j$ instead of $\varphi$, we write
\[
(\varphi*\nu_j\,\vert \,\psi*\nu_j)
=
(\varphi\,\vert\,\psi*\nu_j*\widetilde{\nu}_j)
=
\sum_{n\in\mathbb Z}
\sum_{m\in\mathbb Z}
\tau\!\bigl(
\psi^*(m)(\nu_j*\widetilde{\nu}_j)(n-m)\varphi(n)
\bigr),
\]
where we used the fact that $\nu_j*\widetilde{\nu}_j$ is real. Let
\[
c_0=(\nu_j*\widetilde{\nu}_j)(0),
\]
and write
\[
\nu_j*\widetilde{\nu}_j
=
c_0\delta_0+u,
\]
where $\delta_0$ is the Dirac mass at $0$.
Then,
\[
u(0)=0,\qquad
\operatorname{supp}(u)\subset \{-2^j,\ldots, 2^j\},
\]
and by the assumptions
(\ref{4Zero}) and (\ref{4NonZero}), we have
$$
0\leq c_0\lesssim r_j^{-1}\qquad\hbox{and}\qquad
|u(n)|\lesssim 2^{-(1+\varepsilon)j},
\quad n\in\mathbb Z.
$$
Accordingly,
\begin{align*}
|\langle \varphi*\nu_j,\psi *\nu_j\rangle|
& \le c_0\left|
\sum_m\tau\bigl((\psi^*(m)\varphi(m)\bigr)
\right|
+
\left|
\sum_{n,m\in\mathbb Z}
\tau\bigl(\psi^*(m)u(n-m)\varphi(n)\bigr)
\right|\\
&\lesssim
r_j^{-1}\,|(\varphi \,\vert\, \psi )|
+
\left|
\sum_{n,m\in\mathbb Z}
\tau\bigl(\psi^*(m)u(n-m)\varphi(n)\bigr)
\right|.
\end{align*}

Let $\beta$ denote the second term on the right-hand side
of this inequality.
As in \cite{LaV}, we let
\[
\varphi_k:=\varphi\chi_{Q_{j,k}},
\qquad
\psi_l:=\psi\chi_{Q_{j,l}},
\]
for $k,l\in\Zdb$, and decompose
\(\varphi=\sum_k \varphi_k\),
\(\psi=\sum_l \psi_l\). Define
\[
Q_{j,k}^*
=
Q_{j,k-1}\cup Q_{j,k}\cup Q_{j,k+1}.
\]
This is the interval of integers with the same center as $Q_{j,k}$ and
three times its radius.

If $m\in Q_{j,k}$ and $n\notin Q_{j,k}^*$, then $|n-m|>2^j$, so that
\[
\psi^*(m)u(n-m)\varphi(n)=0.
\]
Hence,
\begin{align*}
\beta
&=
\left| \sum_k \sum_{i=-1}^1 \sum_{n,m\in\mathbb Z}
\tau\bigl(\psi_{k+i}^*(m)\,
u(n-m)\,\varphi_k(n)\bigr)\right|\\
&\le \sum_k \sum_{i=-1}^1 \left| \sum_{n,m\in\mathbb Z}
\tau\bigl(\psi_{k+i}^*(m)\,u(n-m)\,\varphi_k(n)
\bigr)\right|\\
&\leq\sum_k\sum_{i=-1}^1\|\varphi_k\|_1
\left\|\sum_{m\in\mathbb Z} u(\,\cdot-m)\,
\psi_{k+i}^*(m)\right\|_\infty\\
&\leq\sum_k\sum_{i=-1}^1\|\varphi_k\|_1\|u\|_\infty
\sigma_j(\psi)\\
&= 3 \,\sigma_j(\psi) \|\varphi\|_1 \|u\|_\infty\\
&\lesssim  \sigma_j(\psi) 2^{-(1+\varepsilon)j}\|\varphi\|_1.
\end{align*}
The estimate (\ref{4Key}) follows.
\end{proof}

\subsection{A criterion for transferring maximal inequalities}\label{Criterion}

We consider $(\mu_j)_{j\geq 1}$ and $(\nu_j)_{j\geq 1}$ as in Subsection \ref{LaVictoire}. Recall that $S$ denotes the shift operator on $\N=\ell^\infty_{\mathbb Z}(M)$.

\begin{proposition}\label{4Main}
For every $j\ge 1$, define
$$
V_j = \sum_{n} \mu_j(n) S^n 
\qquad\hbox{and}\qquad
U_j =  \sum_{n} \bigl(\mu_j(n) -\nu_j(n)\bigr)S^n.
$$
Equivalently, $V_j$ (resp. $U_j$) is the convolution operator by $\mu_j$ (resp. $(\mu_j-\nu_j)$).

Assume the following two properties:
\begin{itemize}
\item [(H1)] The sequence $(U_j)_{j\ge1}$ satisfies a weak-type
$(1,1)$-maximal inequality;
\item [(H2)] There exists a constant $C\geq 0$ such that 
$$
\bignorm{V_j\colon \ell^\infty_{\mathbb Z}(M)\longrightarrow
\ell^\infty_{\mathbb Z}(M)}\leq C,\qquad j\geq 1.
$$
\end{itemize}
Then $(V_j)_{j\ge1}$ satisfies a weak type $(1,1)$-maximal inequality.
\end{proposition}

\begin{proof}
We need to show that for any $x \in L^1(\N)=\ell^1_{\mathbb Z}(L^1(M))$,
and for any $\lambda>0$, there exists $e\in\P(\N)$ such that 
\begin{equation}\label{4Goal}
\tau(e^\perp) \lesssim \,\frac{\norm{x}_1}{\lambda}
\qquad\hbox{and}\qquad \norm{e(x*\mu_j)e}_\infty\lesssim\lambda,\quad \forall j\geq 1.
\end{equation}
Using (\ref{2Tr}), we see that if the sequence $(x*\mu_j)_{j}$ is decomposed
as a finite sum
$$
(x*\mu_j)_{j} = (\gamma_j^1)_j+\cdots +(\gamma_j^N)_j
$$
of sequences of elements of $L^1(\N)$, then 
(\ref{4Goal}) holds 
if, for every $i=1,\ldots,N$, we can find $e_i\in\P(\N)$ such that
$\tau(e_i^\perp) \lesssim \,\frac{\norm{x}_1}{\lambda}$
and $\norm{e_i\gamma_j^{i}e_i}_\infty\lesssim\lambda$ for all $j\geq 1$.
We will use this possibility of decomposing the problem without further notice.

By Remark \ref{3Pos}, it suffices to establish (\ref{4Goal})
for positive elements. By density, we may therefore restrict ourselves to $x\in\widetilde{\N}_+$.
Henceforth, we fix such an element $x$, as well as a positive real number $\lambda >0$, and apply Subsection
\ref{CZ} (in particular Lemma \ref{4CZ-Dec}) to the pair $(x,\lambda)$.

We next decompose $x$ in a way that depends on $j$. For any $j \ge 1$, define
\[
\mathcal{E}_j = \chi_{(\lambda r_j,\infty)}(x),
\]
by applying the Borel functional calculus of
$x$ to the indicator function 
$\chi_{(\lambda r_j,\infty)}$. Then, set
\[
a^{(j)}
=
\sum_{s=0}^{\infty} p_s x\mathcal{E}_j\,p_s, \qquad
b^{(j)}
=
\sum_{s=0}^{\infty}
\left(
p_s x\mathcal{E}_j q_s
+
q_s x\mathcal{E}_j p_s
\right),
\]
\[
c^{(j)} = a^{(j)} + b^{(j)},
\qquad
C^{(j)} = (a+b) - c^{(j)}.
\]
By applying the calculation in 
the proof of Lemma \ref{4CZ-Dec}, with
$x \mathcal{E}_j$ in place of $x$,
we obtain
\begin{equation}\label{4cj}
c^{(j)}  = x \mathcal{E}_j - q_0 x\mathcal{E}_j q_0.
\end{equation}
Furthermore, $C^{(j)}$ is obtained from $c^{(j)}$ by replacing $\mathcal{E}_j$ with 
\(\mathcal{E}_j^\perp = \chi_{[0,r_j]}(x)\), that is,
\begin{align}\label{4Cj}
C^{(j)} &= x\mathcal{E}_j^\perp - q_0 x \mathcal{E}_j^\perp q_0\\
& = \sum_{s=0}^{\infty} p_sx\mathcal{E}_j^{\perp}p_s
+
\sum_{s=0}^{\infty}
\left(
p_sx\mathcal{E}_j^{\perp}q_s
+
q_sx\mathcal{E}_j^{\perp}p_s
\right).\nonumber
\end{align}
Finally, we define
\begin{equation}\label{4Cj+}
C^{(j)}_{+}
=
\sum_{s=j}^{\infty} p_s x\mathcal{E}_j^{\perp}p_s
+
\sum_{s=j}^{\infty}
\left(
p_s x\mathcal{E}_j^{\perp}q_s
+
q_sx\mathcal{E}_j^{\perp}p_s
\right).
\end{equation}
and 
\begin{align*}
C^{(j)}_{-}
&
=
\sum_{s=0}^{j-1} p_sx\mathcal{E}_j^{\perp}p_s
+
\sum_{s=0}^{j-1}
\left(
p_sx\mathcal{E}_j^{\perp}q_s
+
q_sx\mathcal{E}_j^{\perp}p_s
\right).
\end{align*}
Since $x=g+a+b$, we have
\begin{equation}\label{4DoubleDec}
x
= g + c^{(j)} + C^{(j)}
\qquad\hbox{and}\qquad
C^{(j)}= C^{(j)}_{+} + C^{(j)}_{-}.
\end{equation}

Set
\[
y_j := x * \mu_j, \qquad j \ge 1.
\]
According to (\ref{4DoubleDec}), we can decompose $y_j$ as
\[
y_j
=
y_j^1+y_j^2+y_j^3+y_j^4+y_j^5,
\]
where
\[
y_j^1 = g * \mu_j,\quad
y_j^2 = c^{(j)} * \mu_j,\quad
y_j^3 = C^{(j)} * (\mu_j-\nu_j),\quad
y_j^4 = C^{(j)}_{+} * \nu_j,\quad
y_j^5 =C^{(j)}_{-} * \nu_j.
\]
It therefore suffices to prove that, for each 
$1 \le i \le 5$, there exists $e\in\P(\N)$ such that
\begin{equation}\label{ineq:maximal-condition}
\tau(e^\perp)\lesssim \,\frac{\norm{x}_1}{\lambda}
\qquad\hbox{and}\qquad
\norm{ey_{j}^{i}e}_\infty\lesssim \lambda,\quad \forall j\geq 1.
\end{equation}

We begin with the first term, $y_j^1=g*\mu_j$. 
By assumption (H2), 
\begin{align*}
\|g * \mu_j\|_\infty
&\le \|V_j\|_{\infty\to\infty}\,\|g\|_\infty \lesssim \|g\|_\infty \lesssim \lambda,
\end{align*}
where the last estimate follows from Lemma \ref{4CZ-Dec}, (3).
Hence, the desired property \eqref{ineq:maximal-condition} holds for $i=1$,
with $e=1$.

We next consider the contribution coming from $C^{(j)}$, namely the third term. 
It follows from (\ref{4Cj}) that
\[
y_j^3 = C^{(j)} * (\mu_j-\nu_j)
= x \mathcal{E}_j^{\perp} * (\mu_j-\nu_j)
- q_0 x\mathcal{E}_j^{\perp} q_0 * (\mu_j-\nu_j).
\]
Since
\[
x\mathcal{E}_j^{\perp} \leq x,
\qquad\text{and}\qquad
U_j\colon 
\N
\longrightarrow
\N
\quad \text{is positive},
\]
by the second half of (\ref{4Positive}),
we have
\[
x \mathcal{E}_j^{\perp} * (\mu_j-\nu_j)
\leq
x * (\mu_j-\nu_j).
\]
By assumption (H1), there exists a projection $e$ such that
\[
\tau_\infty(e^\perp)\lesssim\,\frac{\|x\|_1}{\lambda},\qquad
\left\|
e\bigl(x*(\mu_j-\nu_j)\bigr)e
\right\|_\infty\leq\lambda,\quad \forall\, j\ge1.\]
Consequently,
\[
\left\|e\bigl(x\mathcal{E}_j^{\perp}*(\mu_j-\nu_j)\bigr)e\right\|_\infty
\leq
\lambda,
\qquad \forall j\geq1.
\]
Similarly, there exists a projection $e'$ such that
\[
\tau_\infty(e'^{\,\perp})
\lesssim\,
\frac{\|q_0 x q_0\|_1}{\lambda},
\qquad
\left\|e'\bigl(q_0x\mathcal{E}_j^{\perp}q_0*(\mu_j-\nu_j)\bigr)e'
\right\|_\infty
\leq \lambda,
\quad \forall j\geq1.
\]
Since
\(\|q_0xq_0\|_1\leq \|x\|_1\),
it follows that \eqref{ineq:maximal-condition} holds for $i=3$,
with the projection $e\wedge e'$.

We now turn to the second term \(y_j^2 = c^{(j)} * \mu_j\). By
(\ref{4cj}), we have
\[
y_j^{2}
=x \mathcal{E}_j * \mu_j
- q_0 x\mathcal{E}_j q_0 * \mu_j.
\]
By the first half of (\ref{4Positive}), each
$q_0  x\mathcal{E}_j q_0 * \mu_j$ is positive. We therefore define a projection $e\in\P(\N)$ by
\[
e^\perp
:=
\bigvee_{j\ge 1}
{\mathfrak s}(q_0 x\mathcal{E}_j q_0 * \mu_j).
\]
Then
\(e\,
(q_0 x \mathcal{E}_j q_0 * \mu_j)\,e=0\),
for all $j \ge 1$.

Using (\ref{2Tr}) and the definition of $r_j$, we estimate $\tau_\infty(e^\perp)$ as follows:
\begin{align*}
\tau_\infty(e^\perp)
&\le
\sum_{j=1}^\infty
\tau_\infty
\bigl({\mathfrak s}(q_0 x \mathcal{E}_j q_0 * \mu_j)\bigr)\\
&\le
\sum_{j=1}^\infty
r_j\,
\tau_\infty
\bigl({\mathfrak s}(q_0 x \mathcal{E}_j q_0)\bigr)
\\
&\le
\sum_{j=1}^\infty
r_j\,
\tau_\infty
\bigl({\mathfrak s}(x\mathcal{E}_j)\bigr).
\end{align*}
Moreover, for each $j\geq 1$, we may write
\(\mathcal{E}_j=\sum_{k=j}^{\infty}(\mathcal{E}_k - \mathcal{E}_{k+1})\), and hence
 \(x \mathcal{E}_j=\sum_{k=j}^{\infty} x 
(\mathcal{E}_k - \mathcal{E}_{k+1})\). It follows that
\[
{\mathfrak s}(x\mathcal{E}_j)
\le
\bigvee_{k\ge j}
{\mathfrak s}
\!\left(x (\mathcal{E}_k - \mathcal{E}_{k+1})\right),
\]
and therefore
\[
\tau_\infty
\bigl({\mathfrak s}(x \mathcal{E}_j)\bigr)
\le
\sum_{k=j}^{\infty}
\tau_\infty\bigl(
{\mathfrak s}
\!\left( x (\mathcal{E}_k - \mathcal{E}_{k+1})\right)
\bigr).
\]
Consequently, using 
the assumption (\ref{4Domination}), we have
\begin{align*}
\tau_\infty(e^\perp)
&\le
\sum_{j=1}^{\infty}
r_j
\sum_{k=j}^{\infty}
\tau_\infty
\bigl(
{\mathfrak s}
\!\left(x (\mathcal{E}_k - \mathcal{E}_{k+1})\right)
\bigr) \\
&=
\sum_{k=1}^{\infty}
\tau_\infty
\bigl(
{\mathfrak s}
\!\left(x (\mathcal{E}_k - \mathcal{E}_{k+1})\right)
\bigr) \cdot
\sum_{j=1}^{k}r_j \\
& \lesssim
\sum_{k=1}^{\infty}
r_k\,
\tau_\infty
\bigl(
{\mathfrak s}
\!\left(x (\mathcal{E}_k - \mathcal{E}_{k+1})\right)
\bigr).
\end{align*}
Since \(x
\ge
\sum_{k=1}^{\infty}
x (\mathcal{E}_k - \mathcal{E}_{k+1})\) and
\(x (\mathcal{E}_k - 
\mathcal{E}_{k+1})\ge\lambda r_k (\mathcal{E}_k - \mathcal{E}_{k+1})\) for every $k\geq 1$, we have
\begin{align*}
\|x\|_1
&\ge
\sum_{k=1}^{\infty}
\tau_\infty
\bigl(x (\mathcal{E}_k - \mathcal{E}_{k+1})
\bigr) \ge
\sum_{k=1}^{\infty}
\lambda r_k\,
\tau_\infty (\mathcal{E}_k - \mathcal{E}_{k+1}).
\end{align*}
Finally, since \((\mathcal{E}_k - \mathcal{E}_{k+1})
\ge {\mathfrak s}
\!\left(x(\mathcal{E}_k - \mathcal{E}_{k+1})\right),\)
we conclude 
\begin{align*}
\|x\|_1\ge
\sum_{k=1}^{\infty}
\lambda r_k\,
\tau_\infty\bigl(
{\mathfrak s}
\!\left(
x(\mathcal{E}_k - \mathcal{E}_{k+1})
\right)
\bigr).
\end{align*}
Hence, \(\tau_\infty(e^\perp)
\le\frac{\|x\|_1}{\lambda}\).
This establishes \eqref{ineq:maximal-condition} for $i=2$.

To handle the fourth term,
it suffices to consider
$$
\sum_{s=j}^{\infty} q_s\, x \mathcal{E}_j^\perp\,p_s
$$
instead of $C_+^{(j)}$, since the other two sums in the definition (\ref{4Cj+}) 
are treated similarly. 
For any $s\ge j$, we note that 
$p_s\in\mathcal N_s$, by (\ref{4qs}), and that  
$p_s$ is a projection. Hence, we may write 
$$
p_s
=\sum_{k\in\mathbb Z}
e_{s,k}\,\chi_{Q_{s,k}},
$$
where $e_{s,k}\in \P(M)$ for every $k\in\Zdb$.
Then
\begin{equation}\label{4Traceps}
\tau_\infty(p_s) = 2^s\sum_{k\in\mathbb Z} \tau(e_{s,k}).
\end{equation}
Using this decomposition of $p_s$, we write
\[
q_s x\mathcal{E}_j^\perp p_s
=
\sum_{k\in\mathbb Z}
q_s x \mathcal{E}_j^\perp
e_{s,k}
\chi_{Q_{s,k}}.
\]
It follows that
\[
q_s x\mathcal{E}_j^\perp p_s*\nu_j
=
\sum_{k\in\mathbb Z}
\theta_{s,j,k}\,
e_{s,k}\,
\chi_{Q_{s,k}+\operatorname{supp}(\nu_j)},
\]
for some
\(\theta_{s,j,k}\in\N.
\) Moreover, 
\(Q_{s,k}
+\operatorname{supp}(\nu_j)
\subset
Q_{s,k}
+[1,2^j]\cap\Zdb.\)

We define
\[
e_s^\perp
=
\bigvee_{k\in\mathbb Z}
e_{s,k}\,
\chi_{Q_{s,k}+[1,2^j]\cap{\mathbb Z}},
\]
so that
\(e_s\left(q_s x\mathcal{E}_j^{\perp}p_s * \nu_j\right)e_s = 0.
\)
Moreover,
\begin{align*}
\tau_\infty(e_s^{\perp})
&\le \sum_{k\in\mathbb{Z}}
\tau_\infty\!\left(e_{s,k}\,
\chi_{Q_{s,k}+[1,2^j]\cap {\mathbb Z}}\right) \\
&=
\sum_{k\in\mathbb{Z}}
\tau(e_{s,k})\,
\operatorname{Card}\!\left(Q_{s,k}+[1,2^j]\cap\Zdb\right) \\
&=
\sum_{k\in\mathbb{Z}}
\tau(e_{s,k})\,
(2^s+2^j) \\
&\le
\sum_{k\in\mathbb{Z}}
2^{s+1}\,\tau(e_{s,k}),
\qquad \text{because } s\ge j,\\
&=
2\,\tau_\infty(p_s),\qquad \text{by (\ref{4Traceps})}.
\end{align*}
Now set \(e=\bigwedge_{s=j}^{\infty}e_s,\) so that
$$
e\left(
\sum_{s=j}^{\infty}
q_s x \mathcal{E}_j^{\perp}p_s*\nu_j
\right)e=0.
$$ 
Moreover, by the above estimate, (\ref{2Tr}) and (\ref{4Sumps}), we have
$$
\tau_\infty(e^{\perp})\le
\sum_{s=j}^{\infty}
\tau_\infty(e_s^{\perp}) 
\le
2\sum_{s=j}^{\infty}
\tau_\infty(p_s) 
\le
2\sum_{s=0}^{\infty}
\tau_\infty(p_s) \\
=
2\,\tau_\infty(q_0^\perp).
$$ 
Applying (\ref{4L1-estimate}), we obtain
$$
\tau_\infty (e^{\perp})\le\,\frac{2\|x\|_1}{\lambda}.
$$
This establishes \eqref{ineq:maximal-condition} for $i=4$.

Finally, we consider the remaining contribution from $C_-^{(j)}$, given by
\[
C_-^{(j)} = \sum_{s=0}^{j-1} p_s x\mathcal{E}_j^\perp p_s + 
\sum_{s=0}^{j-1} p_s x\mathcal{E}_j^\perp q_s + 
\sum_{s=0}^{j-1} q_s x\mathcal{E}_j^\perp p_s.
\]
As before, it suffices to consider the second sum instead of $C^{(j)}_{-}$, since the other two can be handled in the same way. We define
\[
z_j = \sum_{s=0}^{j-1} p_s x\mathcal{E}_j^\perp q_s \ast \nu_j.
\]
To show that \eqref{ineq:maximal-condition} is satisfied with
the sequence $(z_j)_{j\ge1}$ in place of $(y^5_j)_{j\geq 1}$,
it suffices to prove that
\begin{equation}\label{4zj}
\sum_{j=1}^{\infty}\|z_j\|_2^2 \lesssim \lambda\|x\|_1.
\end{equation}
Indeed, by the non-commutative Markov inequality, 
for every $j\geq 1$ there exists a projection $e_j$ such that $\tau_\infty (e_j^\perp)\leq
\lambda^{-2}\|z_j\|_2^2$ and $\|e_j z_j e_j\|_\infty \leq \lambda$. 
Letting $e = \bigwedge_j e_j$, we obtain
\[
\tau_\infty(e^\perp) \leq \sum_{j=1}^\infty 
\tau_\infty(e_j^\perp) \leq \frac{1}{\lambda^2}
\sum_{j=1}^\infty \|z_j\|_2^2.
\]
Thus, $\|e z_j e\|_\infty \leq \lambda$ for all 
$j \ge 1$, and the estimate (\ref{4zj}) yields
$\tau_\infty(e^\perp) \lesssim \lambda^{-1}\|x\|_1$, 
which is precisely the desired property in \eqref{ineq:maximal-condition}.

We now prove (\ref{4zj}). 
Expanding the sum of the squared $L^2$-norms yields
\[
\sum_{j=1}^{\infty}\|z_j\|_2^2 = \sum_{j=1}^{\infty} \sum_{0\le s_1,s_2<j} 
\left(p_{s_1} x\mathcal{E}_j^\perp q_{s_1}\ast\nu_j\,
\vert\,p_{s_2}x\mathcal{E}_j^\perp q_{s_2}\ast\nu_j \right).
\]
Recall the notation $\sigma_j(\,\cdotp)$ from Lemma \ref{4Key}.
We claim that the operators 
$p_s x \mathcal{E}_j^\perp q_s$ satisfy
\begin{equation}\label{4Claim}
\sigma_j\bigl(p_s x\mathcal{E}_j^\perp q_s\bigr)\lesssim \lambda 2^j,
\qquad 0\leq s\leq j-1.
\end{equation}
Assuming this claim for the moment, we apply 
Lemma \ref{analog LaV prop 3.1} with
$\varphi = p_{s_1} x\mathcal{E}_j^\perp q_{s_1}$ and 
$\psi= p_{s_2} x\mathcal{E}_j^\perp q_{s_2}$ to obtain
\[
\sum_{j=1}^{\infty}\|z_j\|_2^2 \lesssim \sum_{j=1}^{\infty} 
\sum_{0\le s_1,s_2<j} r_j^{-1} \left| \left( p_{s_1}
x\mathcal{E}_j^\perp q_{s_1}\,\vert\, p_{s_2}x\mathcal{E}_j^\perp q_{s_2} 
\right) \right| + 
\lambda 2^{-\varepsilon j} \|p_{s_1}x\mathcal{E}_j^\perp q_{s_1}\|_1.
\]
For $s_1\neq s_2$, the inner product vanishes because the 
projections $p_s$ are mutually orthogonal ($p_{s_2}p_{s_1}=0$). 
Thus,
\[
\sum_{j=1}^{\infty}\|z_j\|_2^2 \lesssim \sum_{j=1}^{\infty} 
\biggl( r_j^{-1} \sum_{s=0}^{j-1} \|p_s x
\mathcal{E}_j^\perp q_s\|_2^2 + 
\lambda 2^{-\varepsilon j}\,j\, \|p_s x\mathcal{E}_j^\perp q_s\|_1 \biggr).
\]
Since $\|p_s x\mathcal{E}_j^\perp q_s\|_1 \le \|x\|_1$, we 
deduce
\[
\sum_{j=1}^{\infty}\|z_j\|_2^2 \lesssim \sum_{j=1}^{\infty} 
r_j^{-1} \sum_{s=0}^{j-1} \|p_s x\mathcal{E}_j^\perp q_s\|_2^2 
+ \lambda \left( \sum_{j=1}^{\infty} 
j^2 2^{-\varepsilon j} \right) \|x\|_1.
\]
Since $\sum_{j=1}^{\infty} j^{2} 2^{-\varepsilon j}<\infty$, 
it remains to prove the estimate
\begin{equation}\label{star}
\sum_{j=1}^{\infty} r_j^{-1} \sum_{s=0}^{j-1} 
\left\|p_s x\mathcal{E}_j^\perp q_s\right\|_2^2 
\lesssim \lambda\|x\|_1.
\end{equation}

Since $\sum_s p_s\leq 1$, we have
\begin{align*}
\sum_{s=0}^{j-1} \left\|p_s x\mathcal{E}_j^\perp q_s\right\|_2^2 
& \le \sum_{s=0}^{j-1} \left\|p_sx\mathcal{E}_j^\perp\right\|_2^2\\
& =\sum_{s=0}^{j-1}\tau_\infty\bigl(\mathcal{E}_j^\perp
\varphi p_sx\mathcal{E}_j^\perp\bigr) \\
& = \tau_\infty\biggl(\mathcal{E}_j^\perp
x\biggl(\sum_{s=0}^{j-1}p_s\biggr) 
x\mathcal{E}_j^\perp\biggr)\\
& \leq \tau_\infty\bigl(\mathcal{E}_j^\perp
x^2\mathcal{E}_j^\perp\bigr) = \
\left\|x\mathcal{E}_j^\perp\right\|_2^2.
\end{align*}
In \cite[End of Section 3]{LaV},
it was observed that the assumption (\ref{4Domination}) implies a 
reversed estimate
\begin{equation}\label{4Reversed}
r_k^{-1}\gtrsim \sum_{j=k}^\infty r_{j}^{-1},\qquad  k\geq 1.
\end{equation}
Setting $r_0=0$, we can express the $L^1$-norm of $x$ and 
complete the estimate as follows. Since 
$1\geq \sum_{k=1}^\infty (\E_{k-1} -\E_k)$, we have 
$$
x\geq \sum_{k=1}^{\infty}\frac{r_{k}^{-1}}{\lambda}
x^2(\E_{k-1} -\E_k),
$$
which implies
$$
\norm{x}_1\geq \sum_{k=1}^{\infty}\frac{r_{k}^{-1}}{\lambda}
\tau_\infty\bigl(x^2(\E_{k-1} -\E_k)\bigr).
$$
Consequently, using (\ref{4Reversed}),
\begin{align*}
\| x\|_1 
&\gtrsim\frac{1}{\lambda} \sum_{k=1}^{\infty} \sum_{j=k}^{\infty} r_j^{-1} \tau_\infty\bigl(x^2(\E_{k-1} -\E_k)\bigr) \\
& =\frac{1}{\lambda} \sum_{j=1}^{\infty} r_j^{-1} \sum_{k=1}^{j} \tau_\infty\bigl(x^2(\E_{k-1} -\E_k)\bigr)\\
& = \frac{1}{\lambda} \sum_{j=1}^{\infty} r_j^{-1} 
\| x\mathcal{E}_j^\perp\|_2^2.
\end{align*}
This establishes \eqref{star} and completes the proof of the primary estimate
(\ref{4zj}).

It remains to prove the claim (\ref{4Claim}).
Set $\psi = p_s x\mathcal{E}_j^\perp q_s$ for $0\le s\le j-1$. 
Let $k\in\mathbb Z$ and consider a function $v:Q_{j,k}\longrightarrow \mathbb C$. 
By decomposing $v$ into four parts, we may assume without 
loss of generality that $v\ge 0$. Partition $Q_{j,k}$ as
\[
Q_{j,k} = \bigsqcup_{k'\in\Lambda} Q_{s,k'}, \qquad \text{with } \operatorname{Card}(\Lambda)=2^{\,j-s}.
\]
Accordingly,
\[
\sum_{m\in Q_{j,k}} v(m)\psi(m) = 
\sum_{k'\in\Lambda} \sum_{m\in Q_{s,k'}} v(m)\psi(m),
\]
and hence
\[
\left\| \sum_{m\in Q_{j,k}} v(m)\psi(m) \right\|_\infty 
\le \sum_{k'\in\Lambda} \left\| \sum_{m\in Q_{s,k'}} v(m)
\psi(m) \right\|_\infty.
\]

Since $p_s, q_s \in \mathcal N_s$, by (\ref{4qs}),
they are constant on each $Q_{s,k'}$. Let $p_{s,k'} \in M$ and 
$q_{s,k'} \in M$ denote their respective constant values 
on this set. We have $q_{s+1}q_s=q_s$, by (\ref{4Increase}),
and hence $p_sq_{s+1}=p_s$.
Using these two relations, we obtain
\[
\sum_{m\in Q_{s,k'}} v(m)\psi(m)
=p_{s,k'}\biggl(\sum_{m\in Q_{s,k'}}
\bigl(q_{s+1} x\mathcal{E}_j^\perp v q_{s+1}\bigr)(m)
\biggr) q_{s,k'}.
\]
Taking $L^\infty$-norms, we deduce
\[
\left\| \sum_{m\in Q_{s,k'}} v(m)\psi(m) \right\|_\infty
\le
\left\|
\sum_{m\in Q_{s,k'}}
\bigl(q_{s+1}x\mathcal{E}_j^\perp v q_{s+1}\bigr)(m)
\right\|_\infty.
\]
Since $v\geq 0$, we have 
$0\le x\mathcal{E}_j^\perp v \le x\|v\|_\infty$. 
It follows that
\[
0 \le q_{s+1}x\mathcal{E}_j^\perp v q_{s+1}
\le
\|v\|_\infty\,q_{s+1} x q_{s+1}.
\]
Therefore,
\[
\left\| \sum_{m\in Q_{s,k'}} v(m)\psi(m) \right\|_\infty
\le
\|v\|_\infty
\left\|
\sum_{m\in Q_{s,k'}}
\bigl(q_{s+1} x q_{s+1}\bigr)(m)
\right\|_\infty.
\]
Since the arithmetic mean
$$
\frac{1}{2^s}\sum_{m\in Q_{s,k'}}
\bigl(q_{s+1} x q_{s+1}\bigr)(m)
$$
is precisely the value of
$\mathbb{E}_s(q_{s+1}x q_{s+1})$ on $Q_{s,k'}$, 
it follows that
\[
\left\| \sum_{m\in Q_{s,k'}} v(m)\psi(m) \right\|_\infty
\le
\|v\|_\infty
\bignorm{\mathbb{E}_s(q_{s+1}x q_{s+1})}_\infty
\,2^s.
\]
Since $q_{s+1}\in\mathcal N_s$, by (\ref{4qs}), we have
$\mathbb{E}_s(q_{s+1}x q_{s+1})
=q_{s+1}\mathbb{E}_s(x)q_{s+1}$. Applying (\ref{4x2}), we deduce 
$\mathbb{E}_s(q_{s+1}x q_{s+1})
\le
2\,q_{s+1}\mathbb{E}_{s+1}(x)q_{s+1}$,
and hence
\[
\bignorm{\mathbb{E}_s(q_{s+1}x q_{s+1})}_\infty
\le
2\bignorm{q_{s+1}\mathbb{E}_{s+1}(x)q_{s+1}}_\infty.
\]
By (\ref{4Lambda-qs}), \(q_{s+1}\mathbb{E}_{s+1}(x)q_{s+1}
\le
\lambda q_{s+1}\), and therefore 
\(\bignorm{q_{s+1}\mathbb{E}_{s+1}(x)q_{s+1}}_\infty
\leq
\lambda\). Consequently,
\[
\left\| \sum_{m\in Q_{s,k'}} v(m)
\psi(m) \right\|_\infty
\le
2^{s+1}\lambda\,\|v\|_\infty.
\]
Summing over the elements of $\Lambda$, we conclude
\[
\left\| \sum_{m\in Q_{j,k}} v(m)\psi(m) \right\|_\infty
\le
\operatorname{Card}(\Lambda)\,2^{s+1}\lambda\|v\|_\infty
\le
2^{j+1}\lambda\|v\|_\infty.
\]
This proves (\ref{4Claim}) and completes the proof of the proposition.
\end{proof}
\section{An individual random ergodic theorem}\label{5}

We now prove the $L^1$ endpoint result for the random ergodic averages. The
$L^2(M)$ convergence is already provided by Theorem \ref{2Back}; hence,
by the non-commutative Banach principle, Corollary \ref{3Y2}, it remains to establish the
corresponding weak-type $(1,1)$-maximal inequality. The latter will be
obtained in two steps. We first control the deterministic weighted
averages by Yeadon's maximal inequality and summation by parts. We then
transfer the resulting estimate to the random averages by separating
each random kernel into its deterministic mean and its centred part
and applying the criterion of Proposition \ref{4Main}.

\begin{lemma}\label{5Av-Y}
Let $T\colon M\to M$ be a positive Dunford-Schwartz operator, let $\alpha\in(0,1)$ and consider $W_N$ defined by (\ref{2WN}) for all $N\geq 1$. Then the sequence
$$
\biggl(\frac{1}{W_N}\,\sum_{n=1}^N
n^{-\alpha} T_1^n\biggr)_{N\geq 1}
$$
satisfies a weak-type
$(1,1)$-maximal inequality.
\end{lemma}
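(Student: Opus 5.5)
The plan is to write each weighted average as a convex combination of Ces\`aro averages $A_N(T_1)$ via Abel summation, and then apply Yeadon's maximal inequality. Yeadon's theorem provides exactly the weak-type $(1,1)$ estimate for $(A_N(T_1))_{N\geq1}$ in the sense of Definition \ref{3MI}, for a positive Dunford--Schwartz operator. So it suffices to show that the weighted averages are dominated, uniformly in $N$, by a convex combination of the $A_n(T_1)$. By Remark \ref{3Pos} we may restrict to $x\in L^1(M)_+$.

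Set $S_n=\sum_{k=1}^n T_1^k = nA_n(T_1)$ and $S_0=0$. Summation by parts gives
$$
\sum_{n=1}^N n^{-\alpha}T_1^n=\sum_{n=1}^{N-1}\bigl(n^{-\alpha}-(n+1)^{-\alpha}\bigr)S_n+N^{-\alpha}S_N .
$$
Hence
$$
\frac1{W_N}\sum_{n=1}^N n^{-\alpha}T_1^n=\sum_{n=1}^{N}\lambda_{N,n}A_n(T_1),
$$
where $\lambda_{N,n}=W_N^{-1}n\bigl(n^{-\alpha}-(n+1)^{-\alpha}\bigr)$ for $n<N$ and $\lambda_{N,N}=W_N^{-1}N^{1-\alpha}$. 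All of these coefficients are nonnegative because $n\mapsto n^{-\alpha}$ is decreasing. Applying the same Abel summation to the identity operator (all $T^k$ replaced by $1$) shows that $\sum_n\lambda_{N,n}=1$.

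Now fix $x\geq0$ and $\lambda>0$. Yeadon's inequality yields a projection $e$ with $\tau(e^\perp)\leq C\|x\|_1/\lambda$ and $\|eA_n(T_1)(x)e\|_\infty\leq\lambda$ for all $n$. Since $eA_n(T_1)(x)e\geq 0$, this last bound is equivalent to $eA_n(T_1)(x)e\leq\lambda e$. Taking the convex combination gives
$$
0\leq e\Bigl(\frac1{W_N}\sum_{n=1}^N n^{-\alpha}T_1^n(x)\Bigr)e\leq\lambda e ,
$$
so the norm is at most $\lambda$ for every $N$, with the same projection $e$. This is the required weak-type $(1,1)$-maximal inequality.

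The main point to check is that Yeadon's maximal inequality is available in exactly this form, with a single projection controlling all $A_n(T_1)(x)$ for $x\in L^1(M)_+$. That is Yeadon's theorem \cite{Ye}, extended to semifinite $M$ and Dunford--Schwartz $T$ as in \cite{JX}. Apart from this, the argument needs only positivity and the convexity of the weights.
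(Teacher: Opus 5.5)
Your proof is correct and is essentially the paper's argument: the paper uses the same Abel summation (cited from \cite{LZ}) to write the weighted average as a nonnegative combination of the Ces\`aro averages $A_m(T_1)$, and then a single Yeadon projection controls all of them. Your observation that the weights sum to exactly $1$ sharpens the paper's remark that their total tends to $1$, and the reduction to $x\geq 0$ is not needed, since the triangle inequality with convex weights already gives $\|e(\cdot)e\|_\infty\leq\lambda$ for arbitrary $x$.
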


\begin{proof}
We compare the weighted averages with the usual Ces\`aro averages
$A_N(T)$ from \eqref{2Cesaro}. By the main result of \cite{Ye}, the
sequence $(A_N(T_1))_{N\geq1}$ satisfies a weak-type $(1,1)$-maximal
inequality. Thus there is a constant $C\geq0$ such that, for every
$x\in L^1(M)$ and every $\lambda>0$, there exists a projection
$e\in\P(M)$ satisfying $\tau(e^\perp)\leq C\lambda^{-1}\norm{x}_1$
and 
\begin{equation}\label{5Yeadon}
\bignorm{e[A_N(T)](x)e}_\infty\leq\lambda,\qquad N\geq 1.
\end{equation}

Set
\[
\lambda_m := m \bigl(m^{-\alpha} - (m+1)^{-\alpha}\bigr),
\qquad m\geq 1.
\]
The summation-by-parts calculation from \cite[Lemma 4.2]{LZ} gives
$$
\frac{1}{W_N} \sum_{n=1}^N n^{-\alpha} T_1^n(x) =\,\frac{N^{1-\alpha}}{W_N} 
[A_N(T_1)](x) + 
\left(\frac{\sum_{m=1}^{N-1} \lambda_m}{W_N}\right)
\left(\sum_{m=1}^{N-1} \lambda_m\right)^{-1}
\sum_{m=1}^{N-1} \lambda_m [A_m(T_1)](x).
$$
Multiplying on both sides by $e$, taking the $L^\infty$-norm and using the triangle inequality we obtain
\begin{align*}
\Bignorm{e\Bigl(&\frac{1}{W_N}  \sum_{n=1}^N n^{-\alpha} T_1^n(x)\Bigr)e}_\infty \\
&\leq \,\frac{N^{1-\alpha}}{W_N} \bignorm{e[A_N(T_1)](x)e}_\infty
+ \left(\frac{\sum_{m=1}^{N-1} \lambda_m}{W_N}\right)
\left(\sum_{m=1}^{N-1} \lambda_m\right)^{-1}
\sum_{n=1}^N \lambda_m \bignorm{e[A_m(T_1)](x)e}_\infty.
\end{align*}
Hence, by (\ref{5Yeadon}),
$$
\Bignorm{e\Bigl(\frac{1}{W_N}  \sum_{n=1}^N n^{-\alpha} T_1^n(x)\Bigr)e}_\infty
\leq \lambda\Biggl( \frac{N^{1-\alpha}}{W_N}
+ \left(\frac{\sum_{m=1}^{N-1} \lambda_m}{W_N}\right)\Biggr).
$$
Since 
$$
 \frac{N^{1-\alpha}}{W_N}
+ \left(\frac{\sum_{m=1}^{N-1} \lambda_m}{W_N}\right)\ \longrightarrow 1\quad
\hbox{as } \ N\to\infty,
$$
by \eqref{2WNEquiv} (see also \cite{LZ}), the preceding estimate yields the  estimate
$$
\Bignorm{e\Bigl(\frac{1}{W_N} \sum_{n=1}^N n^{-\alpha} T_1^n(x)\Bigr)e}_\infty
\lesssim \lambda.
$$
This shows the desired weak-type
$(1,1)$-maximal inequality.
\end{proof}

Lemma \ref{5Av-Y} provides the weak-type maximal inequality for the deterministic weighted averages, which is used in proving our main result.

\begin{theorem}\label{5Main}
Let $(M,\tau)$ be a tracial von Neumann algebra.
Let $J\colon M\to M$ be a trace-preserving Jordan 
isomorphism. Assume that $\alpha\in\bigl(0,\frac12\bigr)$, where $\alpha$ is the parameter from (\ref{2Alpha}). Then there exist a contractive projection
$P_1\colon L^1(M)\to L^1(M)$ and a measurable set
$\Omega'\subset\Omega$ with $\Pdb(\Omega')=1$ such that, for every
$\omega\in\Omega'$ and every $x\in L^1(M)$, we have
\begin{itemize}
\item[(1)]
$$
\frac{1}{W_N}\sum_{n=1}^N X_n(\omega)\,J_1^n(x)
\,\longrightarrow\,P_1(x)\quad\hbox{b.a.u.}
\quad\hbox{as }N\to\infty.
$$

\item[(2)]
$$
\frac{1}{m}\sum_{k=1}^m J_1^{n_k(\omega)}(x)
\,\longrightarrow\,P_1(x)\quad\hbox{b.a.u.}
\quad\hbox{as }m\to\infty.
$$
\end{itemize}
\end{theorem}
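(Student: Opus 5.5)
Since $J$ is a trace-preserving Jordan isomorphism, it is a positive Dunford--Schwartz operator. We take $P_1$ to be the contractive extension of the ergodic projection $P_2$ provided by Lemma~\ref{2Gamma1}. By Theorem~\ref{2Back} with $p=2$, there is a set $\Omega_0$ of full probability on which the random weighted averages $\frac{1}{W_N}\sum_{n\le N}X_n(\omega)J_2^n(y)$ converge b.a.u.\ to $P_2(y)$ for every $y\in L^2(M)$. We apply Corollary~\ref{3Y2} with weights $d_n=X_n(\omega)$; there $W_N$ is the deterministic normalization, and $n_0$ is the first selected index. Then (1) reduces to showing that, for $\omega$ in a further full-probability set, the operators $\frac{1}{W_N}\sum_{n\le N}X_n(\omega)J_1^n$ satisfy a weak-type $(1,1)$-maximal inequality.

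Part (2) follows from (1) as in \cite{LZ}. Write $m=\sum_{n\le n_m}X_n$. By (\ref{2SLLN}) we have $m/W_{n_m}\to1$, so the $m$-th average in (2) equals $\frac{W_{n_m}}{m}$ times the $n_m$-th average in (1). Remark~\ref{2tn}, applied after subtracting $P_1(x)$, then handles the scalar factor.

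For the maximal inequality we use a dyadic reduction. The maximal function over all $N$ is controlled by the dyadic blocks: for $2^{j-1}<N\le 2^j$, positivity gives $0\le \sum_{n\le N}X_nJ^n(x)\le\sum_{n\le 2^j}X_nJ^n(x)$ for $x\ge0$, and $W_{2^j}/W_N\lesssim1$. By Remark~\ref{3Pos} it therefore suffices to treat $x\ge0$ and the sequence $V'_j=\frac{1}{W_{2^j}}\sum_{n\le2^j}X_n(\omega)J^n$. By Lemma~\ref{lem:transference} it suffices in turn to prove the inequality for the shift analogues $V_j$ on $\ell^1_{\mathbb Z}(L^1(M))$. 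These are convolution by
\[
\mu_j=W_{2^j}^{-1}X\chi_{[1,2^j]}, \qquad \nu_j=W_{2^j}^{-1}(X_n-n^{-\alpha})\chi_{[1,2^j]},
\]
so that $\mu_j-\nu_j=W_{2^j}^{-1}n^{-\alpha}\chi_{[1,2^j]}\ge0$. We then apply Proposition~\ref{4Main}. Hypothesis (H2) holds because $\|\mu_j\|_{\ell^1}=W_{2^j}^{-1}\sum_{n\le2^j}X_n\to1$ almost surely by (\ref{2SLLN}). Hypothesis (H1) is the shift version of Lemma~\ref{5Av-Y}: apply that lemma with $T=S$ and dominate the dyadic subsequence by the full sequence.

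It remains to verify the structural conditions almost surely. Here $r_j=\sum_{n\le2^j}X_n\sim W_{2^j}\asymp 2^{j(1-\alpha)}$, so $r_j$ is non-decreasing and grows geometrically, which gives (\ref{4Domination}). For (\ref{4Zero}), $(\nu_j*\widetilde\nu_j)(0)=W_{2^j}^{-2}\sum_{n\le 2^j}(X_n-n^{-\alpha})^2\lesssim W_{2^j}^{-2}\bigl(r_j+\sum_{n\le2^j} n^{-\alpha}\bigr)\lesssim r_j^{-1}$. The main obstacle is the off-diagonal estimate (\ref{4NonZero}). For $h\ne0$, $(\nu_j*\widetilde\nu_j)(h)=W_{2^j}^{-2}\sum_n Y_nY_{n-h}$ with $Y_n=X_n-n^{-\alpha}$ centred and independent. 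We will bound it by a high-moment (or Bernstein/Hoeffding-type, handled via the decoupling into the standard two sub-sums of non-overlapping pairs) estimate. This should give $|\sum_nY_nY_{n-h}|\lesssim 2^{j(1-2\alpha)/2}\,j$ with probability at least $1-2^{-3j}$, uniformly over the $\lesssim2^{j+1}$ values of $h$. A union bound over $h$ and Borel--Cantelli over $j$ then yield, almost surely and for large $j$, $|(\nu_j*\widetilde\nu_j)(h)|\lesssim j\,2^{j(1/2-\alpha)}2^{-2j(1-\alpha)}=j\,2^{-j(3/2-\alpha)}$. This is $\le K_2\,2^{-(1+\varepsilon)j}$ precisely when $\alpha<\frac12$, with any $\varepsilon<\frac12-\alpha$. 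The small values of $j$ are absorbed into the constants. Intersecting the resulting full-probability sets with $\Omega_0$ gives $\Omega'$ and completes the argument.
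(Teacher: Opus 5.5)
Your proposal is correct and follows the paper's proof essentially step by step. The chain is the same: Corollary~\ref{3Y2} reduces the problem to a weak-type $(1,1)$-maximal inequality, positivity gives the dyadic domination, Lemma~\ref{lem:transference} transfers to the shift, and Proposition~\ref{4Main} is applied with the same kernels $\mu_j,\nu_j$, with (H1) supplied by Lemma~\ref{5Av-Y}. The one difference is that where the paper simply cites LaVictoire for (\ref{4Domination})--(\ref{4NonZero}) and Le Merdy--Zadeh for (H2) and for (1)$\Rightarrow$(2), you give the arguments yourself: the bound $\|\mu_j\|_{\ell^1}\to 1$ from (\ref{2SLLN}) for (H2), and a decoupled variance-sensitive concentration estimate with a union bound and Borel--Cantelli for the off-diagonal correlations. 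These arguments are sound, provided you use Bernstein (or high moments) rather than plain Hoeffding, which would only reach $\alpha<\frac14$.
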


\begin{proof}
We fix a trace-preserving Jordan isomorphism
$J\colon M\to M$ and first prove part (1).
Let $P_2\colon L^2(M)\to L^2(M)$ be the operator appearing
in Theorem \ref{2Back} applied to $T=J$ and $p=2$.
Let $P_1\colon L^1(M)\to L^1(M)$ be its $L^1$-extension,
as in Lemma \ref{2Gamma1}. In particular, $P_1$ is a
contractive projection.
 
Let $\Omega_0\subset\Omega$ be 
the set of all $\omega\in\Omega$ such that 
(\ref{2SLLN}) holds and  such that 
$$
\frac{1}{W_N}\sum_{n=1}^N X_n(\omega)\, J_2^n(x)
\,\longrightarrow P_2(x)\quad\hbox{b.a.u.}
\quad\hbox{as } N\to\infty,
$$
for all $x\in L^2(M)$.  Since (\ref{2SLLN}) holds almost surely,
Theorem \ref{2Back} gives $\Pdb(\Omega_0)=1$.
By Corollary \ref{3Y2}, (\ref{2SLLN}) and Remark \ref{2tn},
it suffices to show that, for almost every $\omega\in\Omega_0$,
the sequence
\begin{equation}\label{5WL1}
\biggl(\frac{1}{W_N}\sum_{n=1}^N X_n(\omega)\,J_1^n\biggr)_{N\geq 1}
\quad\hbox{satisfies a weak-type }(1,1)\hbox{-maximal inequality}.
\end{equation} 
To this end, for $\omega\in\Omega_0$, define
$$
V_{j,\omega} = \frac{1}{W_{2^j}}\,
\sum_{n=1}^{2^j} X_n(\omega) S^n
\qquad\hbox{and}\qquad
V'_{j,\omega} = \frac{1}{W_{2^j}}\,
\sum_{n=1}^{2^j} X_n(\omega) J^n,
$$
which we extend to $\ell^1_{\mathbb Z}(L^1(M))$ and $L^1(M)$,
respectively.

For any $N\geq 2$, let 
$j\geq 1$ be the unique integer such that $2^{j-1}<N\leq 2^j$. Since $J$ is positive, we have
$$
0\leq \sum_{n=1}^N X_n(\omega)\, J^n\,\leq 
\sum_{n=1}^{2^j} X_n(\omega)\, J^n,
$$
hence
$$
0\leq\,\frac{1}{W_N}\,
\sum_{n=1}^N X_n(\omega)\, J^n
\leq \biggl(\frac{W^{2^j}}{W^{2^{j-1}}}\biggr)
V'_{j,\omega}.
$$
Moreover, $(W_{2^{j-1}})^{-1}W_{2^j}\to 2^{1-\alpha}$ as $j\to\infty$, by (\ref{2WNEquiv}). 
Thus, if $(V'_{j,\omega})_{j\geq1}$ satisfies a weak-type
$(1,1)$-maximal inequality, then
$\bigl(W_N^{-1}\sum_{n=1}^N X_n(\omega)J^n\bigr)_{N\geq1}$
satisfies a weak-type $(1,1)$-maximal inequality for positive
elements. By Remark \ref{3Pos}, this implies (\ref{5WL1}). By Lemma \ref{lem:transference}, it therefore suffices to show that,
for almost every $\omega\in\Omega_0$,
\begin{equation}\label{5WL1bis}
\bigl(V_{j,\omega}\bigr)_{j\geq1}
\quad\hbox{satisfies a weak-type }(1,1)\hbox{-maximal inequality}.
\end{equation}

We now prove (\ref{5WL1bis}) using Proposition \ref{4Main}. For $\omega\in\Omega_0$ and $j\geq1$, define
$\mu_{j,\omega}$ and $\nu_{j,\omega}$ in
$\ell^\infty_{\mathbb Z}$ by
\begin{align*}
\mu_{j,\omega}(n) & = \frac{1}{W_{2^j}}X_n(\omega),\qquad 1\leq n\leq 2^j,\\
\nu_{j,\omega}(n) & = \frac{1}{W_{2^j}}\bigl(X_n(\omega)-n^{-\alpha}
\bigr),\qquad 1\leq n\leq 2^j,\\
\mu_{j,\omega}(n) & =\nu_{j,\omega}(n)=0,\qquad n\notin[1,2^j].
\end{align*}
Then (\ref{4Support}) and (\ref{4Positive}) 
are satisfied when $\mu_j=\mu_{j,\omega}$
and $\nu_j=\nu_{j,\omega}$.

Set $r_{j,\omega} = {\rm Card}\bigl({\rm supp}(\mu_{j,\omega})\bigr)$.
It follows from \cite[Section 4]{LaV} that there exists
$\Omega_1\subset\Omega_0$ with $\Pdb(\Omega_1)=1$ such that,
for every $\omega\in\Omega_1$, the conditions
(\ref{4Domination}), (\ref{4Zero}), and (\ref{4NonZero}) hold with
$r_j=r_{j,\omega}$ and $\nu_j=\nu_{j,\omega}$. This is the point in the proof where the hypothesis $\alpha<\frac12$ is used.

Consider 
$$
U_{j} : = \sum_n\bigl(\mu_{j,\omega}(n)-\nu_{j,\omega}(n)\bigr) S^n\,
=\,\frac{1}{W_{2^j}}\sum_{n=1}^{2^j} n^{-\alpha} S^n.
$$
By Lemma \ref{5Av-Y}, the sequence $(U_j)_{j\geq1}$ satisfies
assumption (H1) of Proposition \ref{4Main}. On the other hand,
\cite[Lemma 5.5]{LZ} implies that, for almost every
$\omega\in\Omega_1$,
$$
\sup_j
\bignorm{V_{j,\omega}\colon
\ell^\infty_{\mathbb Z}(M)\longrightarrow\ell^\infty_{\mathbb Z}(M)}
<\infty.
$$
Thus, $(V_{j,\omega})_{j\geq1}$ satisfies assumption (H2) of
Proposition \ref{4Main}. Consequently, (\ref{5WL1bis}) holds for
almost every $\omega\in\Omega_0$, which proves part (1).

Finally, the argument in the last part of the proof of \cite[Theorem 5.2]{LZ} shows that part (2) follows from part (1).
\end{proof}

\bigskip\noindent
{\bf Acknowledgement.} Part of this work was developed during the HAVNA workshop at ICMAT, where the authors worked together. The authors gratefully acknowledge the support of the Heilbronn Institute for Mathematical Research and the UKRI/EPSRC Additional Funding Programme. The first and third authors gratefully acknowledge the support of EIPHI Graduate school (contract ANR-17-EURE-0002). The third author warmly thanks Vlad Gheorghiu for his kind hospitality during her visit to the Institute for Quantum Computing in Waterloo, including the wonderful keyboard that made writing this paper all the more joyful.

\bigskip   

\end{document}